	\theoremstyle{plain}
		\newtheorem{theorem}{Theorem}
		\newtheorem{proposition}[theorem]{Proposition}
		\newtheorem{lemma}[theorem]{Lemma}
		\newtheorem{corollary}[theorem]{Corollary}
		\newtheorem*{theorem*}{Theorem}
		\newtheorem*{proposition*}{Proposition}
		\newtheorem*{lemma*}{Lemma}
	\theoremstyle{definition}
		\newtheorem*{definition*}{Definition}
		\newtheorem{example}[theorem]{Example}
		\newtheorem*{example*}{Example}
		\newtheorem*{examples*}{Examples}
	\theoremstyle{remark}
		\newtheorem*{remark}{Remark}
\newcommand{\bbC}{\mathbb{C}}
\newcommand{\bbQ}{\mathbb{Q}}
\newcommand{\bbR}{\mathbb{R}}
\newcommand{\bbZ}{\mathbb{Z}}
\newcommand{\calA}{\mathcal{A}}
\newcommand{\calB}{\mathcal{B}}
\newcommand{\calF}{\mathcal{F}}
\newcommand{\frakf}{\mathfrak{f}}
\newcommand{\frako}{\mathfrak{o}}
\newcommand{\frakp}{\mathfrak{p}}
\newcommand{\frakF}{\mathfrak{F}}
\newcommand{\bfA}{\mathbf{A}}
\newcommand{\bfB}{\mathbf{B}}
\newcommand{\bfC}{\mathbf{C}}
\newcommand{\bfG}{\mathbf{G}}
\newcommand{\bfT}{\mathbf{T}}
\newcommand{\bfU}{\mathbf{U}}
\newcommand{\bfr}{\mathbf{r}}
\newcommand{\sfV}{\mathsf{V}}
\newcommand{\sfv}{\mathsf{v}}
\newcommand{\scrA}{\mathscr{A}}
\newcommand{\scrI}{\mathscr{I}}
\newcommand{\red}{\color{red}}
\DeclareMathOperator{\GL}{GL}
\DeclareMathOperator{\Sp}{Sp}
\DeclareMathOperator{\bfSL}{\bf SL}
\DeclareMathOperator{\bfSp}{\bf Sp}
\DeclareMathOperator{\Hom}{Hom}
\DeclareMathOperator{\End}{End}
\DeclareMathOperator{\ind}{ind}
\DeclareMathOperator{\Span}{span}
\renewcommand{\subset}{\subseteq}
\numberwithin{equation}{section}
\newcolumntype{L}{>{$}l<{$}}
\newcolumntype{C}{>{$}c<{$}}
\newcolumntype{R}{>{$}r<{$}}
\begin{document}
\title{Shallow Characters and Supercuspidal Representations}
\author{Stella Sue Gastineau\\\texttt{stellasuegastineau@gmail.com}}
\date{\today}
\maketitle
\begin{abstract}
In 2014, Reeder and Yu constructed epipelagic representations of a reductive $p$-adic group $G$ from stable functions on shallowest Moy-Prasad quotients. In this paper, we extend these methods when $G$ is split. In particular, we classify all complex-valued characters vanshing on a slightly deeper Moy-Prasad subgroup and show that, while sufficient, a naive extension of Reeder-Yu's stability condition is not necessary for constructing supercuspidal representations.
\end{abstract}

\section{Introduction}
\subsection{Notation}
Let $k$ be a non-archimedean local field with value group $\bbZ$ and ring of integers $\frako$ with prime ideal $\frakp$ and residue field $\frakf:=\frako/\frakp$ of finite cardinality $q$ and characteristic $p$. Let $K$ be a maximal unramified extension of $k$, with algebraically closed residue field $\frakF$. Let $\bfG$ be an absolutely simple, simply connected algebraic group defined and splitting over $k$. We fix the following subgroups of $\bfG$ for consideration:
\begin{itemize}
\item $\bfT$ a maximal torus, defined and splitting over $k$.
\item $\bfB$ a Borel subgroup of $\bfG$, containing $\bfT$ and defined over $k$.
\item $\bfU$ the unipotent radical of $\bfB$, defined over $k$.
\end{itemize}
We will also use unbolded letters $G,B,T,U$ to denote the $k$-rational points of $\bfG,\bfB,\bfT,\bfU$ respectively. We will be assuming the basic structure of such groups, which can be found in \cite{IwahoriMatsumoto65,Tits79}.

\subsection{Motivation}
The group $G$ acts on its Bruhat-Tits building $\calB=\calB(\bfG,k)$ and for each point $\lambda\in\calB$, the stabilizer $P:=G_\lambda$ has a filtration by open {\bf Moy-Prasad subgroups}:
\[
P> P_{r_1}> P_{r_2}>\cdots
\]
indexed by an increasing, discrete sequece $\bfr(\lambda)=(r_1,r_2,\dots)$ of positive real numbers. The first Moy-Prasad subgroup $P_{r_1}$ is called the {\bf pro-unipotent radical} of $P$, and will be denoted by $P_+$. In their papers, Gross-Reeder \cite{GrossReeder10} and Reeder-Yu \cite{ReederYu14} study complex characters of 
\[
\chi:P_+\to\bbC^\times
\] 
that are trivial on the Moy-Prasad subgroup $P_{r_2}$.
In this paper we will go a little bit deeper down the Moy-Prasad filtration and classify all {\bf shallow characters}, those being characters that are trivial on Moy-Prasad subgroup $P_1\subset P_{r_2}$. 

\newpage
In \S\ref{subsec: shallow characters}, we show that a shallow character on $P_+$ can be recovered from its restrictions to its affine root subgroups and extended to a group homomorphism. In particular, in Theorem~\ref{main theorem} we show that in order to to define a shallow character, it is both necessary and sufficient that the extension be trivial on commutators
\[
[U_\beta,U_\alpha]\subset\prod_{i,j>0}U_{i\alpha+j\beta}
\]
where $\alpha$ and $\beta$ are affine roots whose gradients are not linearly dependant. 

Following a classification of shallow characters, we ask for which shallow characters $\chi:P_+\to\bbC^\times$ is the compactly-induced representation
\[
\ind_{P_+}^G(\chi)={\small\left\{\phi:G\to \bbC~\left|~ \begin{gathered}\text{$\phi(hx)=\chi(h)\cdot\phi(x)$}\\\text{$\phi$ compactly supported}\end{gathered}\right\}\right.}
\]
a supercuspidal representation of $G$. In their papers, Gross-Reeder and Reeder-Yu give a classification of supercuspidal representations of $G$ via stable orbits in a related graded Lie algebra. In Proposition~\ref{ReederYu type prop} of \S\ref{subsec: naive extension}, we look at a naive generalization of \cite[Propositio~2.4]{ReederYu14} and show that it is sufficient but not necessary for determining which shallow characters induce up to supercuspidal representations of $G$.


\section{Shallow Characters}
Throughout this paper we will fix an alcove of the apartment $\calA\subset\calB$ corresponding to $T$, and we will let 
\[
\Delta=\{\alpha_0,\alpha_1,\dots,\alpha_\ell\}
\]
denote the corresponding set of simple affine roots. We will also fix a point $\lambda$ contained in the closure of this alcove. We will denote by $\calF_J\subset\calA$ the facet containing $\lambda$ given by the non-vanishing of the simple affine roots $\Delta_J\subset\Delta$, where
\[
J\subsetneq\{0,1,,\dots,\ell\}.
\]
We will also let $P=G_\lambda$ denote the stabilizer of $\lambda$ in $G$.
\subsection{Shallow affine roots}
Given an affine root $\alpha:\calA\to\bbR$, we say that its {\bf depth} (at $\lambda$) is the real number $\alpha(\lambda)$. Then we say that $\alpha$ is {\bf shallow} (at $\lambda)$ if its depth is strictly between $0$ and $1$. We also say that $\alpha$ is {\bf decomposable} (as a shallow affine root) if there exists another shallow affine root $\beta$ such that $\alpha-\beta$ is a shallow affine root. Otherwise, we say that $\alpha$ is {\bf indecomposable} (as a shallow affine root).

Note that the depth of a shallow affine root precisely depends on $\lambda$; whereas, the set of decomposable and indecomposable shallow affine roots depend only on the the facet $\calF_J$ and not on the point $\lambda$ itself. In fact, setting
\[
n_J(n_0\alpha_0+n_1\alpha_1+\cdots+n_\ell\alpha_\ell):=\sum_{j\in J}n_j
\]
for $n_j\in\bbZ$, we can characterize the indecomposable shallow affine roots as follows:

\begin{lemma}
\label{decomposable shallow affine root}
A shallow affine root $\alpha$ is indecomposable if and only if $n_J(\alpha)=1$.
\end{lemma}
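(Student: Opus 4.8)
The plan is to establish the two implications separately, after recording a basic observation: a shallow affine root is necessarily a \emph{positive} affine root with $n_J(\alpha)\ge 1$. Indeed, since $\lambda$ lies in the closure of the fixed alcove we have $\alpha_i(\lambda)\ge 0$ for every $i$, with $\alpha_i(\lambda)>0$ exactly when $i\in J$; hence the depth of any affine root $\alpha=\sum_i n_i(\alpha)\alpha_i$ is $\alpha(\lambda)=\sum_{j\in J}n_j(\alpha)\,\alpha_j(\lambda)$, which being positive forces all coordinates $n_i(\alpha)$ to be $\ge 0$ and forces $n_j(\alpha)>0$ for at least one $j\in J$. The implication ``$n_J(\alpha)=1\Rightarrow\alpha$ indecomposable'' is then formal: if $\alpha=\beta+\gamma$ with $\beta,\gamma$ shallow, both are positive affine roots with $n_J(\beta),n_J(\gamma)\ge 1$, so $n_J(\alpha)=n_J(\beta)+n_J(\gamma)\ge 2$.

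The content is the converse: a shallow $\alpha$ with $n_J(\alpha)\ge 2$ decomposes. I would use the finite reflection group $W_J$ generated by the reflections in the walls $\{\alpha_k=0\}$ with $k\notin J$ (finite, since $\Delta\setminus\Delta_J$ is a proper subset of the affine simple roots). As $\alpha_k(\lambda)=0$ for these $k$, every element of $W_J$ fixes $\lambda$, hence preserves depths (so carries shallow roots to shallow roots); and $n_J$ is constant on $W_J$-orbits, since the generating reflections leave the coordinates $n_j$ with $j\in J$ untouched. Now replace $\alpha$ by the unique representative $\alpha'$ of its $W_J$-orbit with $\langle\alpha',\alpha_k^\vee\rangle\le 0$ for all $k\notin J$; it is again a shallow positive affine root with $n_J(\alpha')\ge 2$, so $\height(\alpha')\ge 2$. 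The standard ``peel off a simple root'' argument — from $0<(\alpha',\alpha')=\sum_k n_k(\alpha')\,(\alpha',\alpha_k)$, using the standard $W$-invariant form (positive on real roots), one obtains an index $i$ with $n_i(\alpha')>0$ and $(\alpha',\alpha_i)>0$ — is now \emph{forced} to pick $i\in J$, by the antidominance of $\alpha'$. Since $\langle\alpha',\alpha_i^\vee\rangle\ge 1$, the vector $\alpha'-\alpha_i$ lies on the $\alpha_i$-string through $\alpha'$, hence is a root; it is positive (the coordinates of $\alpha'$ with the $i$-th lowered by one, all still $\ge 0$), with $n_J(\alpha'-\alpha_i)=n_J(\alpha')-1\ge 1$ and depth $<\alpha'(\lambda)<1$, so it is shallow; and $\alpha_i$ is shallow because $0<\alpha_i(\lambda)\le\alpha'(\lambda)<1$. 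Writing $\alpha=w^{-1}\alpha'=w^{-1}\alpha_i+w^{-1}(\alpha'-\alpha_i)$ for $w\in W_J$ with $w\alpha=\alpha'$, and using that $w^{-1}$ fixes $\lambda$ and permutes affine roots, exhibits $\alpha$ as a sum of two shallow affine roots.

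The step I expect to require genuine care is the claim ``$\alpha'-\alpha_i$ is a root, hence an honest affine root of the building.'' A priori the $\alpha_i$-string through $\alpha'$ could meet the imaginary roots — the nonzero multiples of the null vector $\delta$ — and then $\alpha'-\alpha_i$ might equal $\delta$, which is not an affine root. A short analysis of the string (in the reduced, untwisted affine root system attached to the split group $\bfG$, the $\alpha_i$-string through $\delta$ is just $\{\delta-\alpha_i,\delta,\delta+\alpha_i\}$) shows this occurs only when $\alpha'=\delta+\alpha_i$, i.e.\ when $\alpha'$ is the affine function $x\mapsto\alpha_i(x)+1$; but then $\alpha'(\lambda)=\alpha_i(\lambda)+1\ge 1$, contradicting the shallowness of $\alpha'$. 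Thus shallowness is exactly what keeps the decomposition inside the set of affine roots; the remaining ingredients are the standard combinatorics of root systems and of parabolic subgroups of Coxeter groups.
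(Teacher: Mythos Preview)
Your proof is correct, and the easy direction ($n_J(\alpha)=1\Rightarrow$ indecomposable) matches the paper exactly. For the converse, however, you take a genuinely different route. The paper writes $\alpha=\alpha_{i_1}+\cdots+\alpha_{i_m}$ as an ordered sum of simple affine roots so that every initial segment $\beta_j$ \emph{and} every terminal segment $\alpha-\beta_j$ is an affine root (citing Carter, Lemma~3.6.2), and then splits at the minimal $j$ with $i_j\in J$; this gives $n_J(\beta_j)=1$ and $n_J(\alpha-\beta_j)=n_J(\alpha)-1\ge 1$ directly. Your argument instead exploits the parabolic subgroup $W_{J^c}$ of the affine Weyl group: you transport $\alpha$ to an antidominant representative $\alpha'$, and then the standard positive-form argument forces the peeled simple root to lie in $\Delta_J$. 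The paper's approach is shorter and purely combinatorial, but leans on a chain lemma whose two-sided form (both $\beta_j$ and $\alpha-\beta_j$ are roots) is slightly stronger than the usual statement. Your approach is self-contained once one accepts the Kac--Moody machinery (invariant form, root strings, imaginary roots), and your observation that shallowness is precisely what prevents the peeled difference $\alpha'-\alpha_i$ from landing on $\bbZ_{>0}\delta$ is a nice structural explanation of why the argument goes through in the affine setting.
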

\begin{proof}
Let $\alpha$ be a shallow root. First note that if $n_J(\alpha)=1$, then $\alpha$ must be indecomposable as a shallow affine root: Indeed, if $\beta,\alpha-\beta$ is an affine root, then exactly one of $\alpha-\beta$ and $\beta$ is shallow since
\[
n_J(\alpha-\beta)=n_J(\alpha)-n_J(\beta).
\]
Therefore, for the remainder of the proof we suppose that $n_J(\alpha)\geq 2$.

First write
\[
\alpha=\alpha_{i_1}+\alpha_{i_2}+\cdots+\alpha_{i_m},
\]
so that 
\begin{align*}
\beta_j&=\alpha_{i_1}+\cdots+\alpha_{i_j}\\
\alpha-\beta_j&=\alpha_{i_{j+1}}+\cdots+\alpha_{i_m}
\end{align*}
are an affine roots for all $j=1,2,\dots,m$. Such a decomposition is possible, for example, by Lemma~3.6.2 in \cite{Carter89}. Since $n_J(\alpha)\geq 2$, we know that there exists a $j=1,2,\dots,m$ such that both $\beta_j$ and $\alpha-\beta_j$ are shallow. For instance, we can choose $j$ to be minimal such that $\alpha_{i_j}$ is a shallow affine root in $\Delta_J$. Thus, by setting $\beta=\beta_j$, we have given a decomposition
\[
\alpha=\beta+(\alpha-\beta)
\]
as shallow affine root whenever $n_J(\alpha)\geq 2$.
\end{proof}

\begin{lemma}
\label{shallow affine root lemma 1}
Suppose that $\alpha,\beta$ are shallow affine roots such that there are positive integers $i,j>0$ such that $i\alpha+j\beta$ are shallow affine roots. Then $\alpha+\beta$ is a shallow affine root.
\end{lemma}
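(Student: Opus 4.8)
The plan is to verify separately that $\alpha+\beta$ has the correct gradient and the correct depth at $\lambda$, combining these by means of the following structural fact: since $\bfG$ is split with value group $\bbZ$, the affine roots are exactly the affine-linear functions on $\calA$ of the form $x\mapsto\langle a,x\rangle+n$ with $a$ in the (reduced) root system $\Phi$ of $(\bfG,\bfT)$ and $n\in\bbZ$. Writing $\dot\alpha,\dot\beta\in\Phi$ for the gradients and $\alpha=\dot\alpha+m_\alpha$, $\beta=\dot\beta+m_\beta$ with $m_\alpha,m_\beta\in\bbZ$, the point is that $\alpha+\beta=(\dot\alpha+\dot\beta)+(m_\alpha+m_\beta)$ is automatically an affine root as soon as $\dot\alpha+\dot\beta\in\Phi$. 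So I would reduce the lemma to showing (i) $\dot\alpha+\dot\beta\in\Phi$, and (ii) $0<(\alpha+\beta)(\lambda)<1$.

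Statement (ii) I expect to be immediate: since $i,j\geq 1$ and $\alpha(\lambda),\beta(\lambda)>0$ (both roots being shallow),
\[
0<\alpha(\lambda)+\beta(\lambda)\leq i\,\alpha(\lambda)+j\,\beta(\lambda)=(i\alpha+j\beta)(\lambda)<1 ,
\]
the last inequality because $i\alpha+j\beta$ is shallow, and $(\alpha+\beta)(\lambda)=\alpha(\lambda)+\beta(\lambda)$.

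For (i), I would start from the observation that $i\alpha+j\beta$ being an affine root forces its gradient $i\dot\alpha+j\dot\beta$ into $\Phi$, and then split on the linear (in)dependence of $\dot\alpha,\dot\beta$. If $\dot\beta=\dot\alpha$, then $i\dot\alpha+j\dot\beta=(i+j)\dot\alpha$ with $i+j\geq 2$ is not a root of the reduced system $\Phi$, so this case does not occur. If $\dot\beta=-\dot\alpha$, then $\alpha+\beta=m_\alpha+m_\beta$ is a constant lying in $(0,2)$, hence equal to $1$; moreover $(i-j)\dot\alpha\in\Phi$ forces $|i-j|=1$, so in the subcase $i=j+1$ one computes $(i\alpha+j\beta)(\lambda)=\alpha(\lambda)+j(\alpha(\lambda)+\beta(\lambda))=\alpha(\lambda)+j\geq\alpha(\lambda)+1>1$ (and symmetrically when $j=i+1$), contradicting that $i\alpha+j\beta$ is shallow; so this case does not occur either. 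The remaining case, $\dot\alpha$ and $\dot\beta$ linearly independent, is where the real content lies: the roots lying in $\Span(\dot\alpha,\dot\beta)$ form a rank-two reduced root system that contains the positive-coefficient combination $i\dot\alpha+j\dot\beta$ with $i,j\geq 1$, and I would conclude $\dot\alpha+\dot\beta\in\Phi$ by inspecting the rank-two types $A_1\times A_1$, $A_2$, $B_2$, $G_2$ (the first being excluded outright by the existence of such a combination), or more uniformly by the standard root-string fact that if $\langle\dot\alpha,\dot\beta\rangle\geq 0$ then no positive-coefficient combination of $\dot\alpha$ and $\dot\beta$ is a root.

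The main obstacle is this last rank-two step in (i): it is elementary but genuinely uses the classification of rank-two root systems (or a careful root-string/angle argument). A secondary point requiring care is the normalization convention that the affine root system of a split group is precisely $\Phi+\bbZ$; this is what licenses the passage between ``the gradient is a root'' and ``the function is an affine root'', and it is where the reducedness of $\Phi$ and the value group $\bbZ$ enter.
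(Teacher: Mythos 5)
Your proposal matches the paper's strategy: use the shallowness of $\alpha$, $\beta$, and $i\alpha+j\beta$ to get $0<(\alpha+\beta)(\lambda)<1$, then reduce the problem to showing that the gradient sum lies in $R$ by classifying the rank-two subsystem generated by the gradients $a,b$. You are somewhat more careful about the degenerate cases $b=\pm a$ (the paper dispatches the constant case via the minimal relation and does not explicitly mention $b=a$ or the $\bfA_1\times\bfA_1$ possibility), and you keep $\bfA_2$ in the case list rather than first handling $i=j=1$ as the paper does; these are cosmetic differences.

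The one genuine error is the parenthetical ``uniform'' alternative: the claim that $\langle a,b\rangle\geq 0$ forbids any positive-coefficient combination $ia+jb$ with $i,j>0$ from being a root is false. In $\bfB_2$, with $a_1$ the short simple root and $a_2$ the long one, $a_1$ and $a_1+a_2$ are orthogonal yet their sum $2a_1+a_2$ is a root; in $\bfG_2$, $a_1$ and $2a_1+a_2$ make an acute angle yet their sum $3a_1+a_2$ is a root. The correct root-string fact is one-directional — $\langle a,b\rangle<0$ implies $a+b\in R$ — and the lemma's hypothesis does not force the angle to be obtuse (witness the $\bfG_2$ example just given, which has $i=j=1$). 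So the ``uniform'' shortcut does not close the argument; the case-by-case inspection you listed first is what actually carries the proof, and it is the same inspection the paper invokes.
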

\begin{proof}
Suppose that $i\alpha+j\beta$ is a shallow affine root for positive integers $i,j>0$. If both $i,j=1$, then $\alpha+\beta$ is a shallow affine root and we are done. Therefore, without loss of generality, we will assume that $i>0$. Note that in this case, we then have the following chain of inequalities:
\begin{equation}
\label{shallow root depth inequality}
0<\alpha(\lambda)<\alpha(\lambda)+\beta(\lambda)< i\alpha(\lambda)+j\beta(\lambda)<1.
\end{equation}
Thus, if we can show that $\alpha+\beta$ is an affine root, then it must be shallow. 

First, we note that $\alpha+\beta$ cannot be a constant function. Since $G$ is split, the minimal relation of the affine root group is of the form
\[
1=m_0\alpha_0+m_1\alpha_1+\cdots+m_\ell\alpha_\ell.
\]
In particular, if $\alpha+\beta$ was a positive constant function, then it must take value at least $1$. But this contradicts the inequalities in \eqref{shallow root depth inequality}.

Let $a,b$ be the respective gradients of $\alpha,\beta$. The subroot system of $R$ generated by $a$ and $b$ must have rank at most $2$. In fact, its rank must be exactly $2$, since $\alpha+\beta$ is not a constant function. We know that this rank~2 subsystem is not of type $\bfA_2$, since we are assuming that $ia+jb$ is a root for $i>1$. Therefore, we only need to consider the case where $a$ and $b$ generate a root system of type $\bfC_2$ or $\bfG_2$. In both cases, one can check directly that if $ia+jb$ is a root for positive integers $i,j$ then $a+b$ is a root as well.
\end{proof}

\subsection{Shallow characters}
\label{subsec: shallow characters}
A {\bf shallow character} of the pro-unipotent radical $P_+\subset P$ is any group homomorphism 
\[
\chi:P_+\to\bbC^\times
\]
that is trivial on the the Moy-Prasad subgroup 
\[
P_1=\left\langle T_0,U_\alpha\mid \alpha(\lambda)\geq 1\right\rangle,
\]
where $T_0=\bfT(1+\frakp)$ is the maximal compact subgroup of $T$ and $U_\alpha$ is the affine root subgroup of $G$ corresponding to the affine root $\alpha$. Since $P_1$ is a normal subgroup of $P_+$, any shallow character of $P_+$ must factor through the quotient $P_+/P_1$, a finite group generated by subgroups
\[
U_\alpha P_1/P_1\cong U_\alpha/U_{\alpha+1}\cong\frakf
\]
with $\alpha$ being shallow affine roots. Indeed, given any coset $gP_1$ in $P_+/P_1$, there is a unique decomposition
\begin{equation}
\label{unique coset decomposition}
gP_1=\left(\prod_{\alpha}u_{\alpha}(x_\alpha)P_1\right),
\end{equation}
where the product is relative to some fixed order over all shallow affine roots $\alpha$~\cite[\S3.1.1]{Tits79}. Therefore, any shallow character $\chi$ can be recovered from its restriction to the shallow affine root groups via the formula:
\[
\chi(gP_1):=\prod_{\alpha}\chi_{\alpha}(\bar x_\alpha),
\]
where $\chi_\alpha:\frakf\to\bbC^\times$ is the additive character defined by setting
\[
\chi_\alpha(\bar x):=\chi(u_\alpha(x)P_1)
\]
for any lift $x\in\frako$ of $\bar x\in\frakf$.

\begin{lemma}
\label{shallow character commutator lemma}
Let $\chi:P_+/P_1\to\bbC^\times$ be a shallow character of $P_+$ given by additive characters as above. Then for any shallow affine roots $\alpha,\beta$ we have the following identities:
\[
1=\prod_{i,j}\chi_{i\alpha+j\beta}(C_{\alpha\beta ij}\bar x^i\bar y^j),
\]
where the product is over all $i,j>0$ such that $i\alpha+j\beta$ is a shallow affine root and the constants $C_{\alpha\beta ij}$ are given as in the Chevalley Commutator Formula \cite[Theorem~5.2.2]{Carter89}. 
\end{lemma}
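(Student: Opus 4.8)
The plan is to run the Chevalley commutator relation through the quotient map $P_+ \to P_+/P_1$ and read off the identity on additive characters. First I would start with the commutator formula in $G$: for $x, y \in \frako$ and shallow affine roots $\alpha, \beta$ with linearly independent gradients,
\[
[u_\beta(y), u_\alpha(x)] = \prod_{i,j>0} u_{i\alpha+j\beta}\bigl(C_{\alpha\beta ij}\, x^i y^j\bigr),
\]
the product being over all $i,j>0$ for which $i\alpha+j\beta$ is an affine root, taken in the fixed order of \cite[\S3.1.1]{Tits79}. The left-hand side is a commutator in $P_+$, hence maps to the identity coset under $\chi$ (a homomorphism on $P_+/P_1$ sends commutators to $1$). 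So it suffices to understand the image of the right-hand side in $P_+/P_1$ and then apply $\chi$.

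The key observation is a trichotomy for each affine root $\gamma = i\alpha+j\beta$ appearing: since $\alpha,\beta$ are shallow, $\gamma(\lambda) = i\,\alpha(\lambda) + j\,\beta(\lambda) > 0$, so $\gamma$ is never a ``negative'' affine root and $u_\gamma(\,\cdot\,) \in P_+$ always. If $\gamma(\lambda) \geq 1$, then $u_\gamma(C_{\alpha\beta ij} x^i y^j) \in P_1$, so this factor is killed in the quotient; if $0 < \gamma(\lambda) < 1$, then $\gamma$ is shallow and the factor survives as $u_\gamma(C_{\alpha\beta ij} x^i y^j)P_1$. Thus in $P_+/P_1$ the right-hand side collapses to $\prod_{i,j} u_{i\alpha+j\beta}(C_{\alpha\beta ij} x^i y^j)P_1$, the product now restricted to those $i,j>0$ with $i\alpha+j\beta$ shallow. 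Applying $\chi$ and using the multiplicativity of $\chi$ on the coset decomposition \eqref{unique coset decomposition} together with the definition $\chi_\gamma(\bar z) = \chi(u_\gamma(z)P_1)$, the left side gives $1$ and the right side gives $\prod_{i,j}\chi_{i\alpha+j\beta}(\overline{C_{\alpha\beta ij} x^i y^j})$, which is the claimed identity (with $\bar x, \bar y$ the residues of $x, y$, noting $\overline{x^i y^j} = \bar x^i \bar y^j$ and that the structure constants $C_{\alpha\beta ij}$, a priori integers, are read modulo $p$).

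The one point requiring a little care — the main obstacle, such as it is — is the bookkeeping of which root-group elements land in $P_+$ versus $P_1$, and making sure the fixed ordering in the Chevalley formula is compatible with (a refinement of) the ordering used in the unique coset decomposition \eqref{unique coset decomposition}, so that applying $\chi$ factor-by-factor is legitimate. This is handled by the depth computation above: every $i\alpha+j\beta$ with $i,j>0$ has strictly positive depth, so no wrong-sign root groups intrude, and the shallow ones among them form a sub-collection of the shallow affine roots indexing \eqref{unique coset decomposition}; the remaining factors lie in $P_1$ and are absorbed when we pass to the quotient, where $\chi$ is by hypothesis trivial. With this in hand the identity is immediate.
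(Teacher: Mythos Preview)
Your proof is correct and follows essentially the same route as the paper's: apply the Chevalley commutator formula, observe that factors with $i\alpha+j\beta$ of depth $\geq 1$ lie in $P_1$ and hence vanish in the quotient, then use that $\chi$ is a homomorphism into the abelian group $\bbC^\times$ to kill the commutator. The paper additionally invokes Lemma~\ref{shallow affine root lemma 1} to justify that the Chevalley formula applies (by showing $\alpha+\beta$ is an affine root, hence the gradients are not negatives of each other), whereas you simply assume linearly independent gradients; the remaining cases make the product empty and the identity trivial, so this is harmless. Your discussion of ordering compatibility is more cautious than necessary---once $\chi$ is known to be a homomorphism on $P_+/P_1$, the order of the factors is irrelevant since the target is abelian---but it does no harm.
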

\begin{proof}
Let $\alpha,\beta$ be two shallow affine roots such that $i\alpha+j\beta$ is a shallow affine root for some positive integers $i,j>0$. Then by Lemma~\ref{shallow affine root lemma 1}, we know that $\alpha+\beta$ is a shallow affine root. Therefore, we can apply the Chevalley commutator formula \cite[Theorem~5.2.2]{Carter89}, which says that
\[
[u_\beta(y),u_\alpha(x)]P_1=\prod_{i,j}u_{i\alpha+j\beta}(C_{\alpha\beta ij}x^iy^j)P_1
\]
for all $x,y\in\frako$. Here the product is in increasing order over all $i,j>0$ such that $i\alpha+j\beta$ is an affine root. But if any $i\alpha+j\beta$ is not shallow, then $U_{i\alpha+j\beta}\subset P_1$. Therefore, we can assume that the product is only over $i,j>0$ such that $i\alpha+j\beta$ is a shallow affine root. 

Now let $\chi:P_+/P_1\to\bbC^\times$ be any shallow character of $P_+$. Since $\chi$ is a group homomorphism, we know that
\begin{align*}
\chi([u_\beta(y),u_\alpha(x)]P_1)
&=\prod_{i,j}\chi(u_{i\alpha+j\beta}(C_{\alpha\beta ij}x^iy^j)P_1)\\
&=\prod_{i,j}\chi_{i\alpha+\beta}(C_{\alpha\beta ij}\bar x^i\bar y^j)
\end{align*}
where the product is over all $i,j>0$ such that $i\alpha+j\beta$ is a shallow affine root. Finally, since $\chi$ maps into an abelian group $\bbC^\times$, we know that
\[
\chi([u_\beta(y),u_\alpha(x)]P_1)=1,
\]
finishing our proof.
\end{proof}

\begin{theorem}
\label{main theorem}
Suppose that for each shallow affine root $\alpha$, we are given an additive character $\chi_\alpha:\frakf\to\bbC^\times$. Suppose further that for each pair of shallow affine roots $\alpha,\beta$ we have the following relation:
\begin{equation}
\label{shallow character commutator relation}
1=\prod_{i,j}\chi_{i\alpha+j\beta}(C_{\alpha\beta ij}\bar x^i\bar y^j),
\end{equation}
where the product is over all $i,j>0$ such that $i\alpha+j\beta$ is a shallow affine root. Then there exists a unique shallow character $\chi:P_+/P_1\to\bbC^\times$ such that
\begin{equation}
\label{shallow affine root character}
\chi(u_{\alpha}(x)P_1)=\chi_{\alpha}(\bar x)
\end{equation}
for all $x\in\frako$ and shallow affine root $\alpha$. Moreover, any shallow affine root is of this form.
\end{theorem}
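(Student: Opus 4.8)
The plan is to build $\chi$ from the normal form \eqref{unique coset decomposition} and then check it is a homomorphism; the remaining assertions are comparatively soft. Uniqueness is immediate: the cosets $u_\alpha(x)P_1$ generate $P_+/P_1$, so a shallow character satisfying \eqref{shallow affine root character} is determined. For the final sentence, given an arbitrary shallow character $\chi$ one sets $\chi_\alpha(\bar x):=\chi(u_\alpha(x)P_1)$; these are additive characters because $u_\alpha(x)u_\alpha(y)\equiv u_\alpha(x+y)\pmod{U_{\alpha+1}}$, and they satisfy \eqref{shallow character commutator relation} by Lemma~\ref{shallow character commutator lemma}, while \eqref{unique coset decomposition} shows that $\chi$ is recovered from them by \eqref{shallow affine root character}. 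So every shallow character arises as in the theorem.

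For existence, I would \emph{define} $\chi(gP_1):=\prod_\alpha\chi_\alpha(\bar x_\alpha)$, where $gP_1=\prod_\alpha u_\alpha(x_\alpha)P_1$ is the normal form of \eqref{unique coset decomposition} relative to the chosen order. This is a well-defined function on $P_+/P_1$: for shallow $\alpha$ one has $U_\alpha\cap P_1=U_{\alpha+1}=u_\alpha(\frakp)$, so the classes $\bar x_\alpha\in\frakf$ are uniquely determined, and $\chi_\alpha$ is a function on $\frakf$. It then remains to show $\chi(ghP_1)=\chi(gP_1)\chi(hP_1)$. Since the shallow root groups generate $P_+/P_1$, it suffices to prove
\[
\chi\bigl(g\,u_\gamma(x)P_1\bigr)=\chi(gP_1)\,\chi_\gamma(\bar x)
\]
for every shallow root $\gamma$, every $g\in P_+$, and every $x\in\frako$, and then to induct on the number of root-group factors needed to write $hP_1$.

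To prove the displayed identity I would first arrange the reference order in \eqref{unique coset decomposition} so that the depth $\alpha\mapsto\alpha(\lambda)$ is non-decreasing, and then induct downward on $\gamma(\lambda)$, i.e.\ over the finitely many values $s_1<\cdots<s_M$ in $(0,1)$ attained by shallow roots. One computes $\chi(g\,u_\gamma(x)P_1)$ by commuting the factor $u_\gamma(x)$ leftward into the $\gamma$-slot of the normal form of $g$; each passage across a factor $u_\alpha(x_\alpha)$ produces, via the Chevalley commutator formula \cite[Theorem~5.2.2]{Carter89} (applicable by Lemma~\ref{shallow affine root lemma 1}), correction factors supported on the root groups $U_{i\alpha+j\gamma}$ with $i,j>0$, each of depth $\ge\alpha(\lambda)+\gamma(\lambda)>\gamma(\lambda)$ — hence of some depth $s_{m'}$ with $m'>m$, or else already in $P_1$. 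In the base case $\gamma(\lambda)=s_M$ there are no shallow roots of larger depth, so all corrections vanish modulo $P_1$; thus $u_\gamma(x)$ is central in $P_+/P_1$, it merges with the existing $u_\gamma(x_\gamma)$-factor, and additivity of $\chi_\gamma$ gives the claim. In the inductive step one collects the deeper correction terms to the far right (pushing them past later factors introduces only still-deeper corrections, handled by the strong downward hypothesis), so that $g\,u_\gamma(x)\equiv\tilde g\cdot D$ where $\tilde g$ has the $\gamma$-slot of $g$ updated by $x$ and $D$ lies in the subgroup generated by root groups of depth $>s_m$; by the hypothesis $\chi$ is already multiplicative there, so $\chi(g\,u_\gamma(x)P_1)=\chi(gP_1)\chi_\gamma(\bar x)\cdot\chi(DP_1)$, and the contribution $\chi(DP_1)$ is a product of factors of the form $\prod_{i,j}\chi_{i\alpha+j\gamma}(C_{\alpha\gamma ij}\bar x_\alpha^{\,i}\bar x^{\,j})$, each of which is $1$ by hypothesis \eqref{shallow character commutator relation}. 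The one genuine difficulty is the bookkeeping: tracking the order in which the deeper corrections are created, sorted, and merged into the normal form, and confirming that what survives is exactly a product of the expressions appearing in \eqref{shallow character commutator relation}. The relation \eqref{shallow character commutator relation} has been set up precisely so that this residue cancels; conceptually the whole argument amounts to the statement that $P_+/P_1$ is presented by the shallow root groups subject only to additivity and the shallow Chevalley commutator relations, so that a homomorphism from it to the abelian group $\bbC^\times$ is the same datum as a compatible family $(\chi_\alpha)$.
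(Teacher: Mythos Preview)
Your approach is essentially the paper's: define $\chi$ from the normal form and prove $\chi(gu_\gamma(x)P_1)=\chi(gP_1)\chi_\gamma(\bar x)$ by descending induction, using the Chevalley commutator formula and relation \eqref{shallow character commutator relation} to kill the corrections. The paper's execution differs in two small but useful ways. First, it inducts on the \emph{index} $j$ in a fixed enumeration refining the depth order, rather than on the depth value $s_m$; this matters when several shallow roots share the same depth $s_m$, since your inductive hypothesis (valid only for depths strictly greater than $s_m$) does not let you peel off the same-depth factors $u_{\alpha_i}(x_i)$ that $u_\gamma(x)$ must cross. Second, instead of pushing all corrections to the far right into a single $D$, the paper leaves each commutator $[u_{\alpha_j}(x),u_{\alpha_i}(x_i)]$ in place immediately to the right of $u_{\alpha_i}(x_i)$ and then peels the entire tail off one factor at a time; each commutator is rightmost at the moment it is handled, its $\chi$-contribution is directly $\prod_{k,l}\chi_{k\alpha_j+l\alpha_i}(\cdots)=1$ by \eqref{shallow character commutator relation}, and no secondary commutators ever arise. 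This sidesteps exactly the bookkeeping you flag as the main difficulty.
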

\begin{proof}
For the proof of this theorem, we will fix an enumeration of the shallow affine roots $\alpha_1,\dots,\alpha_n$ so that $i<j$ whenever $\alpha_i(\lambda)<\alpha_j(\lambda)$. Then we construct the well-defined function $\chi:P_+/P_1\to\bbC^\times$ by setting
\begin{equation}
\label{definition of chi}
\chi\left(\prod_{i=1}^nu_{\alpha_i}(x_i)P_1\right):=\prod_{i=1}^n\chi_{\alpha_i}(\bar x_i)
\end{equation}
for all $x_1,\dots,x_n\in\frako$. Indeed, this function is well-defined since each coset in $P_+/P_1$ has a unique decomposition of the form \eqref{unique coset decomposition} with respect to this shallow affine root ordering. What follows is a proof that $\chi$ defines a group homomorphism, and thus, is the unique shallow character satisfying \eqref{shallow affine root character}. Since $P_+/P_1$ is generated by the subgroups $U_\alpha P_1/P_1$ for shallow affine roots, it will be sufficient to show that 
\begin{equation}
\label{chi is a character}
\chi(gu_\alpha(x)P_1)=\chi(gP_1)\cdot\chi_\alpha(\bar x)
\end{equation}
for all cosets $gP_1$ in $P_+/P_1$ and all shallow affine roots $\alpha$.

Let $\alpha=\alpha_j$ be a shallow affine root. We now show that \eqref{chi is a character} holds via descending induction on $j$. For the base case, we let $j=n$ so that
\begin{align*}
\chi\left(\left[\prod_{i=1}^nu_{\alpha_i}(x_i)P_1\right]u_{\alpha_n}(x)P_1\right)
&=\chi\left(\left[\prod_{i=1}^{n-1}u_{\alpha_i}(x_i)P_1\right]u_{\alpha_n}(x_n+x)P_1\right)\\
&=\left[\prod_{i=1}^{n-1}\chi_{\alpha_i}(\bar x_i)\right]\cdot \chi_{\alpha_n}(\bar x_n+\bar x)\\
&=\left[\prod_{i=1}^{n}\chi_{\alpha_i}(\bar x_i)\right]\cdot\chi_{\alpha_n}(\bar x)
\end{align*}
for all $x_1,\dots,x_n,x\in\frako$. For the induction step, assume that
\[
\chi(gu_{\alpha_i}(x)P_1)=\chi(gP_1)\cdot \chi_{\alpha_i}(\bar x)
\]
for all cosets $gP_1$ in $P_+/P_1$ and every shallow affine root $\alpha_i$ with $i>j$. In this case, we look at products of the form
{\small\[
\left[\prod_{i=1}^nu_{\alpha_i}(x_i)P_1\right]u_{\alpha_j}(x)P_1=\left[\prod_{i=1}^{j-1}u_{\alpha_i}(x_i)P_1\right] u_{\alpha_j}(x_j+x)P_1\left[\prod_{i=j+1}^nu_{\alpha_i}(x_i)[u_{\alpha_j}(x),u_{\alpha_i}(x_i)]P_1\right]
\]}
If $\alpha_i+\alpha_j$ is a constant, then 
\[
[u_{\alpha_j}(x),u_{\alpha_i}(x_i)]P_1=P_1.
\]
Otherwise, we can use the Chevalley commutator formula to say that
\[
[u_{\alpha_j}(x),u_{\alpha_i}(x_i)]P_1=\prod_{k,l}u_{k\alpha_j+l\alpha_i}(C_{\alpha_j\alpha_i kl}x^kx_i^l)P_1
\]
where the product is in increasing order over over all $k,l>0$ such that $k\alpha_j+l\alpha_i$ is a shallow affine root. Note that each such $k\alpha_j+l\alpha_i$ must occur later than $\alpha_i$ in the enumeration of shallow affine roots since $k\alpha_j(\lambda)+l\alpha_i(\lambda)>\alpha_j(\lambda)$. By repeadily applying the induction hypothesis and using relation \eqref{shallow character commutator relation}, we have that 
\begin{equation}
\label{shallow character commutator relation intermediate step}
\chi(g[u_{\alpha_j}(x),u_{\alpha_i}(x_i)]P_1)=\chi(gP_1)\left(\prod_{k,l}\chi_{k\alpha_j+l\alpha_i}(C_{\alpha_j\alpha_ikl}\bar x^k\bar x_i^l)\right)=\chi(gP_1)
\end{equation}
for all cosets $gP_1$ in $P_+/P_1$. 
Thus, repeatedly applying the induction hypothesis and \eqref{shallow character commutator relation intermediate step}, we have
\begin{multline*}
\chi\left(\left[\prod_{i=1}^nu_{\alpha_i}(x_i)P_1\right]u_{\alpha_j}(x)P_1\right)\\
\begin{aligned}
&=\chi\left(\left[\prod_{i=1}^{j-1}u_{\alpha_i}(x_i)P_1\right] u_{\alpha_j}(x_j+x)P_1\left[\prod_{i=j+1}^nu_{\alpha_i}(x_i)[u_{\alpha_j}(x),u_{\alpha_i}(x_i)]P_1\right]\right)\\
&=\chi\left(\left[\prod_{i=1}^{j-1}u_{\alpha_i}(x_i)P_1\right] u_{\alpha_j}(x_j+x)P_1\left[\prod_{i=j+1}^{n-1}u_{\alpha_i}(x_i)[u_{\alpha_j}(x),u_{\alpha_i}(x_i)]P_1\right]u_{\alpha_n}(x_n)P_1\right)\\
&=\chi\left(\left[\prod_{i=1}^{j-1}u_{\alpha_i}(x_i)P_1\right] u_{\alpha_j}(x_j+x)P_1\left[\prod_{i=j+1}^{n-1}u_{\alpha_i}(x_i)[u_{\alpha_j}(x),u_{\alpha_i}(x_i)]P_1\right]\right)\cdot\chi_{\alpha_n}(\bar x_n)\\
&\vdotswithin{=}\\
&=\chi\left(\left[\prod_{i=1}^{j-1}u_{\alpha_i}(x_i)P_1\right] u_{\alpha_j}(x_j+x)P_1\right)
\left[\prod_{i=j+1}^{n-1}\chi_{\alpha_i}(\bar x_i)\right].\\
\end{aligned}
\end{multline*}
Finally, using the definition of $\chi$ given in \eqref{definition of chi}, we arrive at
\begin{align*}
\chi\left(\left[\prod_{i=1}^nu_{\alpha_i}(x_i)P_1\right]u_{\alpha_j}(x)P_1\right)
&=\chi\left(\left[\prod_{i=1}^{j-1}u_{\alpha_i}(x_i)P_1\right] u_{\alpha_j}(x_j+x)P_1\right)
\left[\prod_{i=j+1}^{n-1}\chi_{\alpha_i}(\bar x_i)\right]\\
&=\left[\prod_{i=1}^{j-1}\chi_{\alpha_i}(\bar x_i)\right] \chi_{\alpha_j}(\bar x_j+\bar x)
\left[\prod_{i=j+1}^{n-1}\chi_{\alpha_i}(\bar x_i)\right]\\
&=\left[\prod_{i=1}^n\chi_{\alpha_i}(\bar x_i)\right]\chi_{\alpha_j}(\bar x)
\end{align*}
for all $x_1,\dots,x_n,x\in\frako$ as desired.

This finishes our proof that there is a unique shallow character of $P_+$ satisfying \eqref{shallow affine root character}. To see that every shallow character of $P_+$ is of this form, we note Lemma~\ref{shallow character commutator lemma} says that its restrictions to shallow affine root groups must satisfy \eqref{shallow character commutator relation}.
\end{proof}

\begin{corollary}
\label{corollary indecomposable shallow character}
Suppose that for each shallow affine root $\alpha$, we are given an additive character 
\[
\chi_\alpha:\frakf\to\bbC^\times.
\]
Suppose further that $\chi_\alpha$ is trivial whenever $\alpha$ is decomposable as a shallow affine root. Then there exists a unique shallow character $\chi:P_+/P_1\to\bbC^\times$ such that
\[
\chi(u_\alpha(x)P_1)=\chi_\alpha(\bar x)
\]
for all $x\in\frako$ and shallow affine roots $\alpha$. 
\end{corollary}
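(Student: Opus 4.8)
The plan is to reduce this to Theorem~\ref{main theorem}. By that theorem it suffices to verify that the given additive characters $\chi_\alpha$ satisfy the commutator relation \eqref{shallow character commutator relation} for every pair of shallow affine roots $\alpha,\beta$; the existence and uniqueness of $\chi$ then follow, and the asserted formula $\chi(u_\alpha(x)P_1)=\chi_\alpha(\bar x)$ is part of the conclusion of that theorem. So I would fix shallow affine roots $\alpha,\beta$ and claim that every index $i\alpha+j\beta$ appearing in the product in \eqref{shallow character commutator relation} --- that is, every $i,j>0$ with $i\alpha+j\beta$ shallow --- is \emph{decomposable} as a shallow affine root. Granting this, the hypothesis of the corollary forces $\chi_{i\alpha+j\beta}$ to be trivial, so each factor $\chi_{i\alpha+j\beta}(C_{\alpha\beta ij}\bar x^i\bar y^j)$ equals $1$ and the product collapses to $1$, which is exactly \eqref{shallow character commutator relation}.

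To prove the claim I would first record the elementary fact that every shallow affine root $\gamma$ satisfies $n_J(\gamma)\geq 1$. Writing $\gamma=\alpha_{i_1}+\cdots+\alpha_{i_m}$ as a sum of simple affine roots (possible as in the proof of Lemma~\ref{decomposable shallow affine root}), we have $\gamma(\lambda)=\sum_k\alpha_{i_k}(\lambda)$; since $\alpha_i(\lambda)=0$ when $\alpha_i\notin\Delta_J$ and $\alpha_i(\lambda)>0$ when $\alpha_i\in\Delta_J$, the shallowness inequality $\gamma(\lambda)>0$ forces at least one index $i_k$ with $\alpha_{i_k}\in\Delta_J$, i.e. $n_J(\gamma)\geq 1$. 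Applying this to $\alpha$ and to $\beta$ and using that $n_J$ is additive, any index $i\alpha+j\beta$ occurring in the product satisfies
\[
n_J(i\alpha+j\beta)=i\,n_J(\alpha)+j\,n_J(\beta)\geq i+j\geq 2,
\]
so in particular $n_J(i\alpha+j\beta)\neq 1$. By Lemma~\ref{decomposable shallow affine root}, $i\alpha+j\beta$ is therefore decomposable as a shallow affine root, establishing the claim.

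Combining these, for every pair of shallow affine roots $\alpha,\beta$ the product in \eqref{shallow character commutator relation} is a product of $1$'s, so Theorem~\ref{main theorem} applies verbatim and yields the unique shallow character $\chi$ with $\chi(u_\alpha(x)P_1)=\chi_\alpha(\bar x)$. I do not expect a genuine obstacle here: the only input beyond Theorem~\ref{main theorem} and Lemma~\ref{decomposable shallow affine root} is the bound $n_J(\gamma)\geq 1$ for shallow $\gamma$, and the one point to handle with a little care is that a shallow affine root really is a nonnegative integral combination of the simple affine roots --- the same structural fact already used implicitly in the proof of Lemma~\ref{decomposable shallow affine root} --- so that evaluating $\gamma(\lambda)$ term by term is legitimate.
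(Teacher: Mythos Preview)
Your proposal is correct and follows essentially the same approach as the paper: reduce to Theorem~\ref{main theorem} by checking that every $i\alpha+j\beta$ appearing in \eqref{shallow character commutator relation} is a decomposable shallow affine root, so that the hypothesis makes each factor trivial. The paper's proof simply asserts this decomposability without further comment, whereas you supply the missing justification via Lemma~\ref{decomposable shallow affine root} and the bound $n_J(\gamma)\geq 1$ for shallow $\gamma$; this added detail is sound and arguably makes the argument clearer.
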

\begin{proof}
By the previous theorem, we only need to show that given any shallow affine roots $\alpha,\beta$ we have the following relations:
\begin{equation}
\label{corollary indecomposable shallow character equation}
1=\prod_{i,j}\chi_{i\alpha+j\beta}(C_{\alpha\beta ij}\bar x^i\bar y^j),
\end{equation}
where the product is in increasing order over all $i,j>0$ such that $i\alpha+j\beta$ is a shallow affine root. But this is true because each $i\alpha+j\beta$ is a decomposable shallow affine root, and thus each $\chi_{i\alpha+j\beta}$ is trivial. Thus \eqref{corollary indecomposable shallow character equation} naturally holds.
\end{proof}

\subsection{The space of shallow characters}
\label{sec: space of shallow characters}
Let $\check\sfV$ be the set of all shallow characters of $P_+$. Then $\check\sfV$ has a natural abelian group structure given by 
\[
(\chi_1+\chi_2)(g)=\chi_1(g)\cdot\chi_2(g).
\]
Moreover, the group $\check\sfV$ can be endowed with the structure of a $\frakf$-vector space as shown below: The finite group $P_+/P_1$ is generated by subgroups of the form
\[
U_\alpha P_1/P_1\cong U_\alpha/U_{\alpha+1}\cong\frakf
\]
for shallow affine roots $\alpha$. Once a pinning of $G$ has been chosen, there is a natural action of $\frakf$ on each of these subgroups by setting
\[
\bar z\cdot u_\alpha(x)P_1:=u_\alpha(zx)P_1
\]
for all $x,z\in\frako$ and shallow affine roots $\alpha$.  This action can be extended to the full group $P_+/P_1$ via distribution by setting
\[
\bar z\cdot (u_\alpha(x)u_\beta(y)P_1)=u_\alpha(zx)u_\beta(zy)P_1
\]
for all $x,y,z\in\frako$ and shallow affine roots $\alpha,\beta$. This in turn endows the abelianization
\[
\sfV:=\frac{P_+/P_1}{[P_+/P_1,P_+/P_1]}
\]
with the structure of a $\frakf$-vector space spanned by vectors $\sfv_\alpha$, the image of $u_\alpha(1)P_1$ under the quotient $P_+/P_1\to\sfV$. Finally, this action endows $\check\sfV$ with the structure of a $\frakf$-vector space with $\frakf$-action given via
\[
[\bar z\cdot\chi](gP_1):=\chi(\bar z^{-1}\cdot gP_1).
\]
Thus, we have shown that $\check\sfV$ is a $\frakf$-vector space.

\subsubsection{Epipelagic characters}
Recall that for real number $0<r<1$, we say that a shallow affine root $\alpha$ has depth $r$ provided that $\alpha(\lambda)=r$. We now say that a shallow character $\chi\in\check\sfV$ has {\bf depth} $r$ provided that the following hold:
\begin{itemize}
\item $\chi_\alpha$ is non-trivial for some shallow affine root $\alpha$ of depth $r$.
\item $\chi_\alpha$ is trivial for all shallow affine roots $\alpha$ of depth greater than $r$.
\end{itemize}
The minimal depth $\alpha(\lambda)=r$ for shallow affine roots $\alpha$ is $r=r_1$, the index of the pro-unipotent radical $P_+=P_{r_1}$ in the Moy-Prasad filtration. The affine roots at this depth are said to be {\bf epipelagic}, and since any epipelagic affine root is necessarily indecomposable as a shallow affine root, Corollary~\ref{corollary indecomposable shallow character} implies that the set of all shallow characters of depth $r_1$ form a non-trivial subspace of $\check\sfV$, denoted 
\[
\check\sfV_+:=\check\sfV_{r_1},
\]
whose dimension is equal to the non-zero number of epipelagic affine roots. More generally, for all real numbers $0<r<1$, we let
\[
\check\sfV_r:=\{\chi\in\sfV\mid\text{$\chi$ is trivial on $P_s$ for all $s>r$}\}
\]
be the subspace of all shallow characters of depth at most $r$.

\section{Supercuspidal Representations}
Recall that a {\bf smooth representation} of $G$ is a group homomorphism
\[
\pi:G\to\GL(V),
\]
where $V$ is a complex vector space, such that for every $v\in V$ there is a compact open subgroup $H\subset G$ such that $\pi(g)v=v$ for every $g\in H$. We say that a smooth representation $\pi$ is {\bf supercuspidal} is every matrix coefficient of $G$ is compactly supported modulo the center $Z(G)$. We will now investigate which shallow characters of $P_+$ give rise to supercuspidal representations of $G$ via compact induction.

\subsection{Compact Induction}
In this section we will recall some basic facts about compact induction: Let $\chi:P_+/P_1\to\bbC^\times$ be a shallow character of $P_+$, and consider the {\bf compactly-induced representation} of $G$
\[
\pi(\chi):=\ind_{P_+}^G(\chi)={\small\left\{\phi:G\to \bbC~\left|~ \begin{gathered}\text{$\phi(hg)=\chi(h)\cdot\phi(g)$}\\\text{$\phi$ compactly supported}\end{gathered}\right\}\right.},
\]
with $G$-action given by right translations:
\[
[n\cdot\phi](g):=\phi(gn)
\]
for all $n,g\in G$. Given any $n\in G$, we set ${}^nP_+:=nP_+n^{-1}$ and let ${}^n\chi$ be the conjugate character of ${}^nP_+$ given by setting
\[
{}^n\chi(g):=\chi(n^{-1}gn)
\]
for all $g\in{}^nP_+$. We then define the {\bf intertwining set} to be 
\[
\scrI(G,P_+,\chi):=\{n\in G\mid\text{${}^n\chi\cong \chi$ on ${}^nP_+\cap P_+$}\}.
\]
Then we have the following basic result:
\begin{lemma}
Let $\chi:P_+/P_1\to\bbC^\times$ be a shallow character of $P_+$. Then the following are equivalent:
\begin{enumerate}[label=\alph*.]
\item $\scrI(G,P_+,\chi)=P_\chi$.
\item $\pi(\chi)$ is irreducible.
\item $\pi(\chi)$ is supercuspidal.
\end{enumerate}
\end{lemma}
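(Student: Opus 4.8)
The plan is to reduce all three statements to a single assertion about the intertwining (Hecke) algebra of the pair $(P_+,\chi)$, namely
\[
\mathcal{H}(G,P_+,\chi):=\bigl\{f\colon G\to\bbC \;\big|\; f(h_1gh_2)=\chi(h_1)f(g)\chi(h_2)\text{ for }h_i\in P_+,\ \supp f\text{ compact}\bigr\},
\]
an associative $\bbC$-algebra under convolution. The first step is the standard dictionary: since $P_+$ is open and $\chi$ is a smooth character, Frobenius reciprocity for compact induction together with the Mackey decomposition of $\Res_{P_+}\pi(\chi)$ gives a canonical algebra isomorphism $\End_G(\pi(\chi))\cong\mathcal{H}(G,P_+,\chi)$; and because $\chi$ is one–dimensional, a nonzero element of $\mathcal{H}(G,P_+,\chi)$ can be supported on a double coset $P_+gP_+$ precisely when ${}^g\chi$ and $\chi$ agree on ${}^gP_+\cap P_+$, so $\mathcal{H}(G,P_+,\chi)$ has a $\bbC$–basis indexed by the $(P_+,P_+)$–double cosets contained in $\scrI(G,P_+,\chi)$. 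As $P_+\subset P_\chi\subset\scrI(G,P_+,\chi)$ always holds, condition (a) is equivalent to the statement that $\mathcal{H}(G,P_+,\chi)$ is spanned by the cosets of $P_\chi/P_+$, i.e. that $\dim_\bbC\End_G(\pi(\chi))=[P_\chi:P_+]$.

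Next I would prove (b)$\Rightarrow$(a): if $\pi(\chi)$ is irreducible then Schur's lemma for smooth representations of $G$ forces $\End_G(\pi(\chi))=\bbC$, so by the dictionary $\scrI(G,P_+,\chi)$ consists of a single $(P_+,P_+)$–double coset; since it contains the identity, that coset is $P_+$, and then $P_+\subset P_\chi\subset\scrI(G,P_+,\chi)=P_+$ gives (a). For (a)$\Rightarrow$(b) and (a)$\Rightarrow$(c) I would run the classical compact–induction argument: (a) says $\chi$ does not intertwine with any of its $G$–conjugates outside $P_\chi$, so if a $G$–submodule $W\subset\pi(\chi)$ contained a nonzero function whose support met a double coset outside $P_\chi$, averaging over $P_\chi$ on both sides would produce a nonzero element of $\Hom_{P_\chi\cap{}^gP_\chi}({}^g\chi,\chi)$ with $g\notin P_\chi$, a contradiction; iterating on the support, $W$ must contain a nonzero function supported inside $P_\chi$, and since such functions generate $\pi(\chi)$ over $G$ we conclude $W=\pi(\chi)$, hence irreducibility. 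The same support bookkeeping shows that every matrix coefficient of $\pi(\chi)$ is a finite sum of functions supported on double cosets inside $\scrI(G,P_+,\chi)=P_\chi$, hence compactly supported, giving supercuspidality.

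The last implication, (c)$\Rightarrow$(a), is where the real work lies and where I expect the main obstacle. First I would observe that $\pi(\chi)=\ind_{P_+}^G\chi$ is cyclic — generated by the function supported on $P_+$ — hence finitely generated; a finitely generated supercuspidal representation is admissible, so $\End_G(\pi(\chi))\cong\mathcal{H}(G,P_+,\chi)$ is finite–dimensional, which by the dictionary forces $\scrI(G,P_+,\chi)$ to be a finite union of $(P_+,P_+)$–double cosets, i.e. bounded. The hard part will be to upgrade ``$\scrI(G,P_+,\chi)$ bounded'' to ``$\scrI(G,P_+,\chi)=P_\chi$'': one must show that any intertwining element $n$, i.e. one with ${}^n\chi\cong\chi$ on ${}^nP_+\cap P_+$, necessarily preserves the facet $\calF_J$ and therefore lies in $P_\chi$. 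This is exactly the point where the shallowness of $\chi$ (its triviality on $P_1$) is used: via the Iwasawa/affine–root decomposition of $n$ and the explicit description of $P_+/P_1$ through the shallow affine root groups, an off–facet $n$ would conjugate some shallow affine root group on which $\chi$ is nontrivial into $P_1$ (or into a root group forced into $P_1$ by the depth inequalities of Lemma~\ref{shallow affine root lemma 1}), contradicting ${}^n\chi\cong\chi$ on ${}^nP_+\cap P_+$. Granting this, (a)$\Leftrightarrow$(b) from the first two paragraphs and (a)$\Leftrightarrow$(c) from the second and third paragraphs together give all the equivalences.
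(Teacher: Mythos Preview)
The paper gives no proof of this lemma; it is stated as a ``basic result'' and immediately followed by the companion decomposition lemma (Lemma~\ref{Mackey theory 2}). So there is nothing to compare against, and the question is whether your argument stands on its own. It does not, and in fact the lemma as written cannot be proved: the implication (a)$\Rightarrow$(b) is false in general.

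Your argument for (a)$\Rightarrow$(b) says that any nonzero $G$-submodule $W$ contains a nonzero function supported inside $P_\chi$, ``and since such functions generate $\pi(\chi)$ over $G$ we conclude $W=\pi(\chi)$''. The second clause is the error. The space of functions in $\pi(\chi)$ supported on $P_\chi$ is $\ind_{P_+}^{P_\chi}\chi$, which has dimension $[P_\chi:P_+]$; a single such function generates over $G$ only the summand $\pi(\chi,\rho)$ corresponding to the $P_\chi$-representation it spans, not all of $\pi(\chi)$. This is exactly the content of the paper's next lemma: under (a) one has $\pi(\chi)=\bigoplus_\rho \dim(\rho)\,\pi(\chi,\rho)$, which is irreducible only when $P_\chi=P_+$. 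So your argument actually proves the correct statement---(a) implies $\pi(\chi)$ is a finite direct sum of irreducible supercuspidals---but not irreducibility. The equivalence (a)$\Leftrightarrow$(b) as printed should be read with (b) replaced by ``$\pi(\chi)$ is semisimple with finite length'' (or one should add the hypothesis $P_\chi=P_+$).

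Your sketch of (c)$\Rightarrow$(a) also has a gap. After correctly extracting boundedness of $\scrI(G,P_+,\chi)$ from supercuspidality, you propose to finish by arguing that for any $n$ not fixing the facet, shallowness forces some affine root group on which $\chi$ is nontrivial to be conjugated into $P_1$. But that last step makes no use of the boundedness you just obtained: it would show $\scrI(G,P_+,\chi)\subset P$ for \emph{every} shallow $\chi$, which is false (take $\chi$ trivial, where $\scrI=G$). Shallowness alone does not give the root-group displacement you want; one needs a genuine input from (c). The honest route is to show that if some $n\in\scrI$ lies outside $P$, then iterating the double coset $P_+ n P_+$ (or passing to the Jacquet module along a suitable parabolic determined by the wall $n$ crosses) produces matrix coefficients with non-compact support, contradicting (c). Your depth heuristic via Lemma~\ref{shallow affine root lemma 1} does not supply this.
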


Recall that the parahoric subgroup $P$ normalizes Moy-Prasad subgroups $P_+,P_1$, and so the conjugate character ${}^n\chi$ is then a shallow charater of $P_+$ for any $n\in P$. We therefore consider the stabilizer of $\chi$ in $P$:
\[
P_\chi:=\{n\in N\mid{}^n\chi=\chi\}\subset \scrI(G,P_+,\chi). 
\]
The finite quotient $P_\chi/P_+$ has order equal to the dimension of the semisimple {\bf intertwining algebra} 
\[
\scrA_\chi:=\End_{P_\chi}(\ind_{P_+}^{P_\chi}(\chi)).
\]
There is a bijection $\rho\mapsto\chi_\rho$ between equivalence classes of irreducible $\scrA_\chi$-modules and the irreducible $P_\chi$ representations appearing in the isotypic decomposition
\[
\ind_{P_+}^{P_\chi}(\chi)=\bigoplus_\rho\dim(\rho)\cdot\chi_\rho.
\]
Then we have the following result, whose proof can be found in \cite[\S2.1]{ReederYu14}:
\begin{lemma}
\label{Mackey theory 2}
Let $\chi:P_+/P_1\to\bbC^\times$ be a shallow character of $P_+$. If $\scrI(G,P_+,\chi)=P_\chi$, then we have the following isotypic decomposition:
\[
\pi(\chi)=\bigoplus_{\rho}\dim (\rho)\cdot \ind_{P_\chi}^G(\chi_\rho),
\]
where the direct sum is over all simple $\scrA_\chi$ modules $\rho$. Moreover, each compactly induced representation
\[
\pi(\chi,\rho):=\ind_{P_\chi}^G(\chi_\rho)
\]
are inequivalent irreducible supercuspidal representations of $G$.
\end{lemma}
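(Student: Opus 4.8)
The final statement to prove is Lemma~\ref{Mackey theory 2}, which is attributed to \cite[\S2.1]{ReederYu14}. Let me think about how I would prove it.

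This is a standard Mackey theory / Bushnell-Kutzko type argument. Let me lay out the structure.The plan is to deduce this from Mackey theory together with the preceding two lemmas. Assume $\scrI(G,P_+,\chi)=P_\chi$. First I would recall the general fact that for an open compact-mod-center subgroup, the intertwining condition $\scrI(G,P_+,\chi)=P_\chi$ forces every $G$-subrepresentation of $\pi(\chi)$ to be admissible and, in fact, that the three conditions (a)--(c) of the preceding lemma are equivalent; combining this with Lemma~\ref{Mackey theory 2}'s hypothesis, each irreducible constituent we produce will automatically be supercuspidal once we know it is irreducible. So the real content is (i) the isotypic decomposition $\pi(\chi)=\bigoplus_\rho\dim(\rho)\cdot\ind_{P_\chi}^G(\chi_\rho)$, and (ii) the irreducibility and pairwise inequivalence of the $\pi(\chi,\rho)$.

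For (i), I would proceed in two stages of induction by transitivity: $\ind_{P_+}^G(\chi)=\ind_{P_\chi}^G\bigl(\ind_{P_+}^{P_\chi}(\chi)\bigr)$. Since $P_\chi/P_+$ is finite, $\ind_{P_+}^{P_\chi}(\chi)$ is a finite-dimensional representation of the (compact-mod-center) group $P_\chi$, and by the stated bijection $\rho\mapsto\chi_\rho$ between simple $\scrA_\chi$-modules and the irreducible $P_\chi$-constituents, it decomposes as $\bigoplus_\rho\dim(\rho)\cdot\chi_\rho$. Applying $\ind_{P_\chi}^G(-)$, which is exact and additive, gives the claimed decomposition of $\pi(\chi)$. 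This step is essentially formal once the intertwining-algebra bijection (quoted just above the lemma) is in hand.

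For (ii), the key computation is the intertwining number: by Mackey's intertwining formula for induction from an open subgroup,
\[
\dim\Hom_G\bigl(\ind_{P_\chi}^G(\chi_\rho),\ind_{P_\chi}^G(\chi_{\rho'})\bigr)=\sum_{P_\chi\backslash G/P_\chi}\dim\Hom_{P_\chi\cap {}^nP_\chi}\bigl(\chi_\rho,{}^n\chi_{\rho'}\bigr).
\]
I would argue that the hypothesis $\scrI(G,P_+,\chi)=P_\chi$ kills every double coset except $n\in P_\chi$: if $n\notin P_\chi$ then $n\notin\scrI(G,P_+,\chi)$, so ${}^n\chi\not\cong\chi$ on ${}^nP_+\cap P_+$, and since $\chi_\rho|_{P_+}$ is $\chi$-isotypic and likewise for $\chi_{\rho'}$, there can be no nonzero $P_\chi\cap{}^nP_\chi$-map between $\chi_\rho$ and ${}^n\chi_{\rho'}$ (restrict further to $P_+\cap{}^nP_+$). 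For the identity double coset the term is $\dim\Hom_{P_\chi}(\chi_\rho,\chi_{\rho'})=\delta_{\rho,\rho'}$ because the $\chi_\rho$ are distinct irreducibles of $P_\chi$. Hence the intertwining number is $\delta_{\rho,\rho'}$, which gives both irreducibility of each $\pi(\chi,\rho)$ and their pairwise inequivalence; supercuspidality then follows from the equivalence (a)$\Leftrightarrow$(c) of the previous lemma applied to $\chi_\rho$ on $P_\chi$ (or directly, since matrix coefficients are supported on $P_\chi$-double cosets, which are compact mod center).

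The main obstacle is the careful bookkeeping in the double-coset argument for (ii): one must check that the restriction of $\chi_\rho$ to $P_+$ really is a multiple of $\chi$ (this uses that $P_+$ is normal in $P_\chi$ with $\chi$ stabilized, i.e. $\chi_\rho$ lies over $\chi$ by construction), and that the vanishing of $\Hom$ on ${}^nP_+\cap P_+$ propagates upward to $P_\chi\cap{}^nP_\chi$; this is where the hypothesis $\scrI(G,P_+,\chi)=P_\chi$ is doing all the work, and it should be invoked verbatim. Everything else is transitivity of induction, exactness, and the quoted intertwining-algebra dictionary.
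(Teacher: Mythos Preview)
Your argument is correct and is precisely the standard Mackey-theoretic proof. The paper does not actually supply its own proof of this lemma; it simply cites \cite[\S2.1]{ReederYu14}, and what you have written is essentially the argument found there (transitivity of induction, the Clifford/intertwining-algebra decomposition of $\ind_{P_+}^{P_\chi}(\chi)$, and the Mackey double-coset computation of $\Hom_G$). One small remark: your appeal to ``the previous lemma applied to $\chi_\rho$ on $P_\chi$'' is not literally what that lemma states (it is formulated for $(P_+,\chi)$), but your parenthetical direct argument---that $P_\chi$ is compact open (since $\bfG$ is simply connected, so parahorics are compact) and hence irreducibility of a compact induction implies compactly supported matrix coefficients---is the clean way to finish, and is what Reeder--Yu do.
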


\subsection{Supercuspidal representations coming from shallow characters}
\label{subsec: naive extension}
Let $\mu$ be any point in the apartment $\calA$. For all positive real numbers $s>0$, let
\[
\sfV_{\mu,s}:=\Span_\frakf\{\sfv_\alpha\in\sfV\mid\text{$0<\alpha(\lambda)<1$ and $\alpha(\mu)\geq s$}\}
\]
be the $\frakf$-span of the vectors $\sfv_\alpha$ for shallow affine roots $\alpha$ such that $\alpha(\mu)\geq s$. Then we have the following sufficient condition for constructing supercuspidal representations:
\begin{proposition}
\label{ReederYu type prop}
Let $\chi\in\check\sfV_{r}$ be any depth $r$ shallow character such that the following holds:
\begin{itemize}
\item[$(*)$] If $n\in N_G(T)$ and $\chi$ identically vanishes on $\sfV_{n\lambda,s}$ for all $s>r$, then $n\lambda=\lambda$.
\end{itemize}
Then $\scrI(G,P_+,\chi)=P_\chi$.
\end{proposition}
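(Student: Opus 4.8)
The plan is to reduce the computation of the intertwining set $\scrI(G,P_+,\chi)$ to an analysis of double cosets $P_+\backslash G/P_+$, and then to show that condition $(*)$ forces every intertwining element to already lie in $P_\chi$. First I would invoke the Bruhat-Tits decomposition of $G$ relative to the parahoric $P$: every element $n\in G$ can be written (up to left and right multiplication by $P_+$, or more precisely using the affine Weyl group of $\bfG$) so that the relevant contribution comes from an element of $N_G(T)$. Since $P$ normalizes $P_+$ and $P_1$, and since $\chi$ factors through $P_+/P_1$, the condition ``${}^n\chi\cong\chi$ on ${}^nP_+\cap P_+$'' is controlled by how the affine-Weyl representative moves the facet $\calF_J$ (equivalently the point $\lambda$). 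The key reduction is therefore: if $n\in\scrI(G,P_+,\chi)$, then after adjusting by $P_\chi$ we may assume $n\in N_G(T)$, and it suffices to prove that such an $n$ must satisfy $n\lambda=\lambda$, since the stabilizer of $\lambda$ in $N_G(T)$ lands in $P$, and an element of $P$ intertwining $\chi$ with itself on all of $P_+$ lies in $P_\chi$ by definition.

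The central computation is to unwind what ${}^n\chi\cong\chi$ on the overlap group ${}^nP_+\cap P_+$ says in terms of affine root data. For $n\in N_G(T)$ acting on the apartment $\calA$, the group ${}^nP_+\cap P_+$ is generated (modulo the torus part) by affine root subgroups $U_\alpha$ with $\alpha(\lambda)>0$ and $(n^{-1}\alpha)(\lambda)>0$, i.e.\ $\alpha(\lambda)>0$ and $\alpha(n\lambda)>0$. On such subgroups, ${}^n\chi$ is the character built from $\chi_{n^{-1}\alpha}$ while $\chi$ is built from $\chi_\alpha$; the condition that these agree (after the standard adjustment by an element of the overlap, which only permutes via commutators into deeper root groups as in Lemma~\ref{main theorem}) forces $\chi_\alpha$ to be \emph{trivial} whenever $\alpha$ is a shallow affine root with $\alpha(n\lambda)$ strictly larger than $r=\operatorname{depth}(\chi)$ but $\alpha(\lambda)>0$ --- because such a root subgroup is visible inside the overlap, its conjugate by $n$ lands at depth $>r$ at $\lambda$ where $\chi$ is already trivial. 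Phrased in the vector-space language of \S\ref{sec: space of shallow characters}, this says exactly that $\chi$ vanishes identically on $\sfV_{n\lambda,s}$ for every $s>r$. Here I would lean on Lemma~\ref{main theorem} to ensure that the commutator corrections appearing when one reorders the product decomposition \eqref{unique coset decomposition} do not spoil the vanishing: any correction term sits in an affine root group $U_{i\alpha+j\beta}$ with strictly larger depth, hence is either trivial (depth $\geq1$) or again constrained to the subspace on which $\chi$ already vanishes.

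Once the equivalence ``$n\in\scrI(G,P_+,\chi)$ (up to $P_\chi$) $\iff$ $\chi$ vanishes on $\sfV_{n\lambda,s}$ for all $s>r$'' is established, the hypothesis $(*)$ immediately yields $n\lambda=\lambda$, and then $n\in P$ and hence $n\in P_\chi$. This gives $\scrI(G,P_+,\chi)\subseteq P_\chi$; the reverse inclusion $P_\chi\subseteq\scrI(G,P_+,\chi)$ is automatic from the definitions, so equality holds. The main obstacle I anticipate is the bookkeeping in the middle step: making precise that ${}^n\chi\cong\chi$ on the overlap (an isomorphism of one-dimensional characters, hence literally equality, but one must first know that $\chi$ restricted to ${}^nP_+\cap P_+$ is recovered from its affine-root data, which requires that ${}^nP_+\cap P_+$ is itself generated by affine root groups in a compatible ordering --- a standard but slightly delicate Bruhat-Tits fact) and then carefully tracking which shallow roots $\alpha$ satisfy simultaneously $\alpha(\lambda)>0$ and $\alpha(n\lambda)\geq s$. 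I would handle this by choosing the enumeration of affine roots adapted to \emph{both} $\lambda$ and $n\lambda$ and citing \cite[\S3.1.1]{Tits79} for the product decomposition, then reducing the character identity to the generators $u_\alpha(x)$ exactly as in the proof of Theorem~\ref{main theorem}.
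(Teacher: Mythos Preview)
Your strategy is correct and matches the paper's: reduce via the affine Bruhat decomposition to $n\in N_G(T)$, then show that the intertwining condition forces $\chi$ to vanish on $\sfV_{n\lambda,s}$ for every $s>r$, whence $(*)$ gives $n\lambda=\lambda$ and so $n\in P$.

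That said, you are making the middle step harder than it needs to be. There is no commutator bookkeeping to do, and Theorem~\ref{main theorem} is not needed here. Both $\chi$ and ${}^n\chi$ are already group homomorphisms on the overlap, so to compare them it suffices to evaluate on single root-group elements $u_\alpha(x)$; no reordering of products ever enters. Concretely, if $\alpha$ is shallow with $\alpha(n\lambda)\geq s>r$, then $U_\alpha\subset{}^nP_s\cap P_+$ and
\[
\chi_\alpha(\bar x)=\chi(u_\alpha(x))={}^n\chi(u_\alpha(x))=\chi(u_{n^{-1}\alpha}(\pm x))=1,
\]
since $(n^{-1}\alpha)(\lambda)=\alpha(n\lambda)\geq s>r$ and $\chi$ has depth $r$. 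This is the whole computation; the paper streamlines it further by restricting to ${}^nP_s\cap P_+$ rather than ${}^nP_+\cap P_+$, which automatically places $n^{-1}\alpha$ at depth $\geq s$. Also, you only need (and only actually argue) the forward implication ``$n$ intertwines $\Rightarrow$ $\chi$ vanishes on $\sfV_{n\lambda,s}$''; calling it an equivalence overstates what is required. Finally, the reduction is by $P$ (via $G=PN_G(T)P$), not by $P_\chi$; once you land in $P\cap\scrI(G,P_+,\chi)$ you get $P_\chi$ for free since ${}^nP_+\cap P_+=P_+$ for $n\in P$.
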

\begin{proof}
Let $\chi\in\check\sfV_r$ be a depth $r$ shallow character of $P_+$ satisfying $(*)$. Since $P$ contains an Iwahori subgroup, the affine Bruhat decomposition \cite{IwahoriMatsumoto65} implies that in order to show that $\scrI(G,P_+,\chi)=P_\chi$, it will be sufficient to consider $n\in N_G(T)$ and show that if 
\begin{equation}\label{Mackey eqn}
{}^n\chi=\chi\text{ on }{}^nP_+\cap P_+.
\end{equation}
then $n\in P$.

Let $n\in N_G(T)$ be such that \eqref{Mackey eqn} holds, and fix a real number $s>r$. It is certainly true that 
\begin{equation}
\label{Mackey eqn 2}
{}^n\chi=\chi\text{ on }{}^nP_{s}\cap P_+
\end{equation}
for the Moy-Prasad subgroup $P_s\subset P$. Let $\alpha$ be any shallow root such that $\alpha(n\lambda)\geq s$. Since it has depth $r$, $\chi$ must then be trivial on $U_{n^{-1}\alpha}\subset P_s$. Therefore, $\chi_\alpha$ must be the trivial additive character, since \eqref{Mackey eqn 2} requires that
\[
\chi_{\alpha}(\bar x)=\chi(u_{\alpha}(x))={}^n\chi(u_{\alpha}(x))=\chi(u_{n^{-1}\alpha}(\pm x))=1
\]
for all $x\in\frako$. But this holds for all $s>r$ and all shallow affine roots $\alpha$ such that $\alpha(n\lambda)\geq s$, and thus $\chi$ vanishes identically on $\sfV_{n\lambda,s}$ for all $s>r$. Consequently, $(*)$ implies that $n\lambda=\lambda$ so that $n\in P$.
\end{proof}

\begin{remark}
In the remainder of this subsection we study condition $(*)$ of Proposition~\ref{ReederYu type prop} in further detail. In particular, we first show in \S\ref{subsub simple supercuspidal} how $(*)$ is a necessary condition for constructing simple supercuspidal representations of $G$. Then in \S\ref{sec: Sp4} we show how, when leaving the epipelagic case, condition $(*)$ is no longer necessary for constructing supercuspidal representations of $G$.
\end{remark}

\subsubsection{Simple supercuspidal representations}
\label{subsub simple supercuspidal}
In this subsubsection only, we will make the additional assumption that $\lambda$ is the barycenter of the fundamental open alcove in $\calA$ bonded by $\Delta$. If 
\begin{equation}
\label{minimal relation simple supercuspidal}
1=m_0\alpha_0+m_1\alpha_1+\cdots+m_\ell\alpha_\ell
\end{equation}
is the minimal integral relation on simple affine roots with $m_i>0$, then $\lambda$ is the unique point such that for all simple $\alpha_i\in\Delta$, 
\[
\alpha_i(\lambda)=1/h,
\]
where $h:=m_0+m_1+\cdots+m_\ell$ is the Coxeter number of $R$. In this case, the parahoric subgroup $P=G_\lambda$ is an Iwahori subgroup of $G$.

\begin{lemma}
\label{stable epipelagic character lemma}
Let $\lambda$ be the barycenter of the fundamental open alcove in $\calA$. Then for any $n\in N_G(T)$ such that $n\lambda\neq \lambda$, there must exist a simple affine root $\alpha_i\in\Delta$ such that $\alpha_i(n\lambda)>1/h$. 
\end{lemma}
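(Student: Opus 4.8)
The plan is to argue by contradiction: suppose $n \in N_G(T)$ satisfies $\alpha_i(n\lambda) \leq 1/h$ for every simple affine root $\alpha_i \in \Delta$, and show that this forces $n\lambda = \lambda$. The key observation is that the barycenter $\lambda$ is characterized by the equations $\alpha_i(\lambda) = 1/h$ for all $i$, and that $n\lambda$ still lies in the closure of the fundamental alcove only if all simple affine roots are nonnegative on it — but here we have a two-sided pin, since the $\alpha_i$ are also bounded above by $1/h$.

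First I would recall the minimal integral relation \eqref{minimal relation simple supercuspidal}: $1 = \sum_{i=0}^\ell m_i \alpha_i$ with all $m_i > 0$, and $h = \sum m_i$. Evaluating this identity at the point $n\lambda$ gives
\[
1 = \sum_{i=0}^\ell m_i\, \alpha_i(n\lambda).
\]
Under the standing assumption $\alpha_i(n\lambda) \leq 1/h$ for all $i$, we get
\[
1 = \sum_{i=0}^\ell m_i\, \alpha_i(n\lambda) \leq \sum_{i=0}^\ell m_i \cdot \frac{1}{h} = \frac{h}{h} = 1,
\]
so equality holds throughout. Since each $m_i > 0$, this forces $\alpha_i(n\lambda) = 1/h$ for every $i$. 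But the barycenter $\lambda$ is the unique point of $\calA$ satisfying $\alpha_i(\mu) = 1/h$ for all simple affine roots $\alpha_i$ (the simple affine roots form an affine basis of functions on $\calA$, so such a system of equations has a unique solution). Hence $n\lambda = \lambda$, contradicting the hypothesis $n\lambda \neq \lambda$. This proves that there must exist some $\alpha_i \in \Delta$ with $\alpha_i(n\lambda) > 1/h$.

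The main subtlety — really the only thing needing care — is justifying that the $\alpha_i$ separate points, i.e. that $\alpha_i(\mu) = 1/h$ for all $i$ pins down $\mu$ uniquely; this is immediate from the fact that the gradients of $\ell$ of the simple affine roots form a basis of the (co)weight space and the remaining one fixes the affine level, which is standard Bruhat–Tits theory. One should also note we do not even need $n\lambda$ to lie in the closed fundamental alcove a priori; the inequality argument above is purely numerical once we grant the assumption on all simple affine roots, so no case analysis on Weyl chambers is required.
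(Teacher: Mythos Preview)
Your proof is correct and, in my view, slightly cleaner than the paper's. Both arguments rest on the minimal relation $1=\sum_i m_i\alpha_i$, but they deploy it differently. The paper introduces the displacement vector $n\lambda-\lambda$ in the translation group, observes that the gradients $a_i$ span the dual space so that some $\langle a_i,c\rangle\neq 0$, and then uses the relation to rule out the case where all of these pairings are nonpositive, concluding that one must be strictly positive. Your argument sidesteps the displacement vector entirely: you evaluate the relation directly at $n\lambda$ and squeeze, obtaining $\alpha_i(n\lambda)=1/h$ for all $i$, and then invoke uniqueness of the barycenter. That uniqueness step is essentially the same content as the paper's spanning observation (the gradients of $\alpha_1,\dots,\alpha_\ell$ form a basis of $E^*$), so the two proofs are close cousins; your packaging simply avoids a ``without loss of generality'' and a small case split.
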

\begin{proof}
Let $n\in N_G(T)$ be such that $n\lambda\neq\lambda$. The difference $\mu=\lambda-n\lambda$ belongs to the translation group
\[
E:=\bbR\otimes_\bbZ\Hom(k,T),
\]
so that we can write $\mu=sc$ for some real number $s>0$ and non-trivial cocharacter $c\in\Hom(k,T)$. For all simple affine roots $\alpha_i\in\Delta$, we have 
\[
\alpha_i(n\lambda)=\alpha_i(\lambda+sc)=\alpha_i(\lambda)+s\langle a_i,c\rangle,
\]
where $a_i$ is the gradient of $\alpha_i$. Since $\Delta$ forms a base of the affine root system, the gradients $a_0,a_1,\dots,a_\ell$ form a spanning set of the $\ell$-dimensional vector space
\[
E^*:=\bbR\otimes_\bbZ\Hom(T,k),
\]
which is dual to $E$ under the natural pairing $\langle\cdot,\cdot\rangle$. Therefore, there must be some $\alpha_i$ such that $\langle a_i,c\rangle\neq 0$. Without loss of generality, we can assume that $\langle a_i,c\rangle >0$ so that $\alpha_i(n\lambda)>1/h$; otherwise, if $\langle a_j,c\rangle\leq 0$ for all $\alpha_j\in\Delta$, then \eqref{minimal relation simple supercuspidal} implies that
\[
0=m_0\langle a_0,c\rangle+m_1\langle a_1,c\rangle+\cdots+m_\ell\langle a_\ell,c\rangle<0,
\]
a contradiction.
\end{proof}

\begin{lemma}
\label{lemma long element}
Let $\lambda$ be the barycenter of the fundamental open alcove in $\calA$. Then given any non-empty, proper subset
\[
I\subsetneq\{0,1,\dots,\ell\},
\]
there must exist an element $n\in N_G(T)$ such that $\alpha_i(n\lambda)<0$ for all $i\in I$.
\end{lemma}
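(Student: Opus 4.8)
The statement asks, for a nonempty proper subset $I\subsetneq\{0,1,\dots,\ell\}$, to produce $n\in N_G(T)$ with $\alpha_i(n\lambda)<0$ for all $i\in I$. The plan is to realize such an $n$ as (a lift of) the longest element $w_I$ of the parabolic (finite) Weyl subgroup $W_I$ generated by the reflections in the simple \emph{linear} roots indexed by $I$ — or rather, to be careful about which roots are affine versus linear. Since $I$ is a proper subset of $\{0,1,\dots,\ell\}$, the subsystem it generates is a genuine \emph{finite} root system (it omits at least one node of the affine Dynkin diagram, so no affine relation is involved), and the corresponding Weyl group $W_I$ is finite with a well-defined longest element $w_I$. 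The key property is that $w_I$ sends every positive root in the span of $\{a_i : i\in I\}$ to a negative root; in particular $w_I(a_i) \in -\sum_{j\in I}\bbZ_{\ge 0}a_j$ for each $i\in I$.

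\textbf{Key steps.} First I would fix the node $m \notin I$ that is omitted and observe that the facet $\calF_J$ and the point $\lambda$ are arranged (via Lemma~\ref{decomposable shallow affine root} and the standing conventions) so that the relevant simple affine roots take small positive values at $\lambda$; this is where I use that $\lambda$ lies in the closure of the fixed alcove, so $\alpha_i(\lambda) > 0$ for all $i$. Second, I would lift $w_I$ to an element $n \in N_G(T)$; since $w_I \in W_I$ fixes the relevant wall structure, $n\lambda$ is obtained from $\lambda$ by applying $w_I$ to the "vector part" relative to a base point on the omitted wall. Third — the computational heart — I would check that for $i\in I$, the affine function $\alpha_i \circ n^{-1}$ (equivalently, evaluating $\alpha_i$ at $n\lambda$) has the form $\alpha_i(n\lambda) = (w_I^{-1}\alpha_i)(\lambda)$, and that $w_I^{-1}\alpha_i = w_I\alpha_i$ is a \emph{negative} affine root whose restriction to the alcove is negative, hence $\alpha_i(n\lambda) < 0$. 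This last point uses that $w_I$ permutes the affine roots, carries the positive ones supported on $I$ to negative ones, and that $\lambda$ sits inside (the closure of) the dominant chamber for that finite subsystem.

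\textbf{Main obstacle.} The delicate part is bookkeeping the difference between the affine and linear pictures: $N_G(T)$ acts on the apartment $\calA$ by affine transformations, and I need to ensure that the chosen lift $n$ of $w_I$ acts with the right \emph{affine} (not merely linear) effect so that all of $\{\alpha_i : i\in I\}$ simultaneously become negative at $n\lambda$. Concretely, $w_I$ as an element of the affine Weyl group can be taken to fix a point on the facet cut out by the omitted node $m$, and then its linear part, applied to the displacement of $\lambda$ from that fixed point, flips the signs on the $I$-roots because $\lambda - (\text{fixed point})$ is strictly dominant for the finite root subsystem generated by $I$. Making "strictly dominant" precise — i.e.\ that $\langle a_i, \lambda - p\rangle > 0$ for all $i\in I$ where $p$ is the chosen fixed point — is the one estimate that requires care, but it follows from $\alpha_i(\lambda) > 0$ together with $\alpha_i(p) = 0$ for $i\in I$ (since $p$ lies on all those walls). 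Once that is in hand, $\alpha_i(n\lambda) = \alpha_i(p) + \langle a_i, w_I(\lambda - p)\rangle = \langle w_I^{-1}a_i, \lambda - p\rangle < 0$ because $w_I^{-1}a_i = w_I a_i$ is a negative root in the $I$-subsystem and $\lambda - p$ is strictly dominant there.
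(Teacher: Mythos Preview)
Your approach is correct and essentially the same as the paper's: both lift the longest element $w_I$ of the finite parabolic subgroup $W_I\subset W$ (generated by the reflections in the simple affine roots $\alpha_i$, $i\in I$) to an element of $N_G(T)$. The paper sidesteps your fixed-point/linear-part decomposition by working entirely in the affine picture, using $\alpha_i(w_I\lambda)=(w_I^{-1}\alpha_i)(\lambda)=(w_I\alpha_i)(\lambda)<0$ directly, since $w_I\alpha_i$ is a negative affine root and hence negative on the open fundamental alcove containing $\lambda$.
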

\begin{proof}
Consider the affine Weyl group
\[
W:=N_G(T)/T_0
\]
and the subgroup $W_I$ of $W$ generated by simple reflections along the simple affine roots $\alpha_i$ for $i\in I$. Note that $W_I$ is a non-empty, finite Coxeter group, since $I$ is a non-empty, proper subset of $\{0,1,\dots,\ell\}$. Let $w:=w_I$ be the long element in $W_I$; that is, $w$ is the unique element on $W_I$ such that $w\alpha_i$ is a negative affine root for all $i\in I$. Such an element has order $2$, so that
\[
w^{-1}\alpha=w\alpha
\]
for all affine roots $\alpha$. Moreover, since an affine root is negative if and only if it takes negative values on the open fundamental alcove, we have 
\[
\alpha_i(w\lambda)=(w^{-1}\alpha_i)(\lambda)=(w\alpha_i)(\lambda)<0
\]
for all $i\in I$. Thus, letting $n\in N_G(T)$ be any lift of $w$, we are done.
\end{proof}

\begin{proposition}
Let $\lambda$ be the barycenter of the fundamental open alcove in $\calA$. Then given any epipelagic character $\chi\in\check\sfV_{1/h}$, the following are equivalent:
\begin{enumerate}[label=\alph*.]
\item $\chi_{\alpha_i}$ is non-trivial for all $\alpha_i\in\Delta$.
\item If $n\in N_G(T)$ and $\chi$ vanishes identically on $\sfV_{n\lambda,s}$ for all $s>1/h$, then $n\lambda=\lambda$.
\end{enumerate}
\end{proposition}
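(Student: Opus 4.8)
The plan is to prove the two implications separately, using the lemmas just established. For the direction (a) $\Rightarrow$ (b), I would argue by contraposition: suppose $n\in N_G(T)$ is such that $n\lambda\neq\lambda$, and show that $\chi$ does \emph{not} vanish identically on $\sfV_{n\lambda,s}$ for some $s>1/h$. By Lemma~\ref{stable epipelagic character lemma}, the hypothesis $n\lambda\neq\lambda$ gives a simple affine root $\alpha_i\in\Delta$ with $\alpha_i(n\lambda)>1/h$. Setting $s:=\alpha_i(n\lambda)>1/h$, the vector $\sfv_{\alpha_i}$ then lies in $\sfV_{n\lambda,s}$ (note $\alpha_i$ is shallow since $\alpha_i(\lambda)=1/h\in(0,1)$). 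Since (a) says $\chi_{\alpha_i}$ is non-trivial, $\chi$ does not vanish on $\sfV_{n\lambda,s}$, which is exactly the negation of the relevant clause in (b). So (a) $\Rightarrow$ (b).

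For the direction (b) $\Rightarrow$ (a), I would again use contraposition: suppose (a) fails, so there is some $\alpha_i\in\Delta$ with $\chi_{\alpha_i}$ trivial. Let $I:=\{j\in\{0,\dots,\ell\}\mid \chi_{\alpha_j}\text{ is trivial}\}$, which is non-empty by assumption. The key point is that $I$ is a \emph{proper} subset of $\{0,1,\dots,\ell\}$: if every $\chi_{\alpha_j}$ were trivial then, since $\chi$ has depth $1/h$ and every shallow affine root of depth exactly $1/h$ is (at the barycenter) one of the simple roots, while every shallow root of depth $>1/h$ already has trivial component by the definition of depth, $\chi$ would be trivial on all shallow root groups, hence $\chi$ would be the trivial character — contradicting the fact that a depth-$r$ character has \emph{some} non-trivial component. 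So $I\subsetneq\{0,\dots,\ell\}$ and $I\neq\varnothing$. Now apply Lemma~\ref{lemma long element} to get $n\in N_G(T)$ with $\alpha_i(n\lambda)<0$ for all $i\in I$. I then need to check that $\chi$ vanishes identically on $\sfV_{n\lambda,s}$ for all $s>1/h$: a generating vector $\sfv_\alpha$ of $\sfV_{n\lambda,s}$ satisfies $0<\alpha(\lambda)<1$ and $\alpha(n\lambda)\geq s>1/h$, and since $\chi$ has depth $1/h$ it has $\chi_\alpha$ trivial unless $\alpha(\lambda)=1/h$, i.e. unless $\alpha=\alpha_j$ for some $j$; but then $\alpha_j(n\lambda)\geq s>1/h>0$ forces $j\notin I$, so by definition of $I$ the character $\chi_{\alpha_j}$ is non-trivial — wait, that is the wrong direction. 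Let me reconsider: I want $\chi$ to \emph{vanish} on $\sfV_{n\lambda,s}$, so I need every generator $\sfv_{\alpha_j}$ with $\alpha_j(n\lambda)\geq s$ to have $\chi_{\alpha_j}$ trivial; but Lemma~\ref{lemma long element} was chosen so that $\alpha_i(n\lambda)<0<s$ for $i\in I$, hence the only simple roots $\alpha_j$ that could contribute to $\sfV_{n\lambda,s}$ are those with $j\notin I$, and those have $\chi_{\alpha_j}$ non-trivial — so $\sfV_{n\lambda,s}$ is \emph{not} annihilated unless it contains no such generator.

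So the correct choice is the complementary one: take $I':=\{0,\dots,\ell\}\setminus I$, the set of $j$ with $\chi_{\alpha_j}$ non-trivial; since (a) fails, $I\neq\varnothing$, so $I'\subsetneq\{0,\dots,\ell\}$, and $I'$ is non-empty because $\chi$, having depth $1/h$, has some non-trivial simple component. Apply Lemma~\ref{lemma long element} to $I'$ to obtain $n\in N_G(T)$ with $\alpha_j(n\lambda)<0$ for all $j\in I'$. Fix $s>1/h$. A generator $\sfv_\alpha$ of $\sfV_{n\lambda,s}$ has $\alpha$ shallow and $\alpha(n\lambda)\geq s>1/h>0$; if $\alpha(\lambda)>1/h$ then $\chi_\alpha$ is trivial by the depth condition, and if $\alpha(\lambda)=1/h$ then $\alpha$ is a simple affine root $\alpha_j$ with $\alpha_j(n\lambda)>0$, so $j\notin I'$, i.e. $j\in I$, whence $\chi_{\alpha_j}$ is trivial by definition of $I$. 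In all cases $\chi_\alpha$ is trivial, so $\chi$ vanishes identically on $\sfV_{n\lambda,s}$ for every $s>1/h$. But $n\lambda\neq\lambda$ since $\alpha_j(n\lambda)<0<1/h=\alpha_j(\lambda)$ for $j\in I'$, so (b) fails. The main obstacle I anticipate is purely bookkeeping: making sure the depth-$1/h$ hypothesis is used correctly to identify the depth-exactly-$1/h$ shallow roots with the simple affine roots (this uses that $\lambda$ is the barycenter), and carefully tracking which of $I$ versus $I'$ the long-element lemma should be applied to so that the sign conditions point the right way.
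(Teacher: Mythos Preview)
Your proposal is correct and, after your self-correction on the choice of index set, follows exactly the paper's route. For $(a\Rightarrow b)$ you use Lemma~\ref{stable epipelagic character lemma} precisely as the paper does; for $(\neg a\Rightarrow\neg b)$ your set $I'=\{j:\chi_{\alpha_j}\text{ non-trivial}\}$ is literally the paper's $I$, and the application of Lemma~\ref{lemma long element} and the subsequent verification that $\chi$ kills $\sfV_{n\lambda,s}$ match the paper's argument. Your initial misstep (applying the long-element lemma to the set where $\chi_{\alpha_j}$ is \emph{trivial}) is caught and fixed, and you are in fact slightly more careful than the paper in two places: you explicitly justify why the depth-$1/h$ shallow roots at the barycenter are exactly the simple affine roots, and you verify $I'\neq\varnothing$ (needed for Lemma~\ref{lemma long element}) using that $\chi$ has depth exactly $1/h$.
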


\begin{proof}
$(a\Rightarrow b)$: Suppose that $\chi_{\alpha_i}$ is non-trivial for all $\alpha_i\in\Delta$, and let $n\in N_G(T)$. By Lemma~\ref{stable epipelagic character lemma} there exists some $\alpha_i$ such that $\alpha_i(n\lambda)>1/h$. Since $\chi_{\alpha_i}$ is non-trivial, there must exist some $s>1/h$ such that $\chi$ does not vanish identically on $\frakf\sfv_{\alpha_i}\subset\sfV_{n\lambda,s}$.

$(\neg a\Rightarrow \neg b)$: Suppose that there exists some simple affine root $\alpha_i\in\Delta$ such that $\chi_{\alpha_i}$ is trivial. 
Setting 
\[
I:=\{i\mid\text{$\chi_{\alpha_i}$ is non-trivial}\}\subsetneq\{0,1,\dots,\ell\}
\]
and applying Lemma~\ref{lemma long element}, we see that there must exist some $n\in N_G(T)$ such that $\alpha_i(n\lambda)<0$ whenever $\chi_{\alpha_i}$ is non-trivial. In this case, for all $s>1/h$, the vector space $\sfV_{n\lambda,s}$ is contained within the span of subspaces $\frakf\sfv_\alpha$ for shallow affine roots $\alpha$ such that $\chi_\alpha$ is trivial. Thus, $\chi$ identically vanishes on $\sfV_{n\lambda,s}$ while $n\lambda\neq\lambda$.
\end{proof}

\begin{corollary}
Let $\lambda$ be the barycenter of the fundamental open alcove in $\calA$, and let $\chi\in\check\sfV_{1/h}$ be any epipelagic character such that $\chi_{\alpha_i}$ is non-trivial for all $\alpha_i\in\Delta$. Then $\scrI(G,P_+,\chi)=P_\chi$.
\end{corollary}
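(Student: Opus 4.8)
The plan is to obtain this corollary as an immediate consequence of the two preceding propositions, so almost no new work is required; the task is really just bookkeeping to line up hypotheses.

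First I would record that an epipelagic character $\chi\in\check\sfV_{1/h}$ has depth exactly $r=1/h$. Since $\lambda$ is the barycenter of the fundamental alcove, the minimal depth $\alpha(\lambda)$ among shallow affine roots is achieved precisely at the simple affine roots $\alpha_i\in\Delta$, where $\alpha_i(\lambda)=1/h$; hence $1/h=r_1$ is the index of the pro-unipotent radical in the Moy--Prasad filtration, $\check\sfV_{1/h}=\check\sfV_+$, and every character in this space is of depth $1/h$ as required by the depth hypothesis of Proposition~\ref{ReederYu type prop}. Because $\chi_{\alpha_i}$ is non-trivial for all $\alpha_i\in\Delta$ by assumption, $\chi$ genuinely has depth $1/h$ (it is non-zero).

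Next I would invoke the preceding proposition: its condition (a), namely that $\chi_{\alpha_i}$ is non-trivial for every simple affine root, is exactly our hypothesis, so condition (b) holds. That is, whenever $n\in N_G(T)$ and $\chi$ vanishes identically on $\sfV_{n\lambda,s}$ for all $s>1/h$, one has $n\lambda=\lambda$. But condition (b) is word-for-word the hypothesis $(*)$ of Proposition~\ref{ReederYu type prop} with $r=1/h$.

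Finally I would apply Proposition~\ref{ReederYu type prop} directly to $\chi$: it is a depth-$1/h$ shallow character satisfying $(*)$, and therefore $\scrI(G,P_+,\chi)=P_\chi$, which is the assertion. There is no real obstacle here; the only points that need a line of care are the verification that ``epipelagic'' forces the depth to be exactly $r_1=1/h$ (so that the depth hypothesis of Proposition~\ref{ReederYu type prop} is met) and the observation that condition (b) of the previous proposition is literally condition $(*)$ — both of which are immediate from the definitions established in \S\ref{sec: space of shallow characters}. If desired, one can append that, combined with the equivalence of $\scrI(G,P_+,\chi)=P_\chi$ with supercuspidality from Lemma~\ref{Mackey theory 2}, this recovers the construction of simple supercuspidal representations in this setting.
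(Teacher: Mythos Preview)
Your proposal is correct and matches the paper's intended argument: the corollary is stated without proof precisely because it follows at once by combining the implication $(a)\Rightarrow(b)$ of the preceding proposition with Proposition~\ref{ReederYu type prop}, exactly as you outline. The only minor quibble is the sentence ``every character in this space is of depth $1/h$''---strictly, $\check\sfV_{1/h}$ also contains the trivial character---but you immediately patch this by using the hypothesis that each $\chi_{\alpha_i}$ is non-trivial, so the argument is sound.
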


\begin{remark}
In the case given by the above corollary, the supercuspidal representations $\pi(\chi,\rho)$ obtained from compact induction are called {\bf simple supercuspidal representations}, and they were first studied by Gross-Reeder in \cite{GrossReeder10}. This is a special class of epipelagic representations which were later studied by Reeder-Yu in \cite{ReederYu14}.
\end{remark}

\subsubsection{A supercuspidal representation of $\bfSp_4(\bbQ_2)$}
\label{sec: Sp4}
Let $G=\bfSp_4(k)$ be the simply connected Chevalley group consisting of matrices in $\bfSL_2(k)$ which are fixed under the endomorphism 
\[
X\mapsto Q^{-1}(X^{\dag})^{-1}Q,
\]
where $[x_{ij}]^\dag=[x_{ji}]$ denotes transposition and $Q$ is the skew-symmetric matrix
\[
Q=\begin{bmatrix}&&&1\\&&1\\&-1\\-1\end{bmatrix}.
\]
Alternatively, $G$ is seen as the group of isometries with respect to the Hermitian form given by $Q$. We fix the diagonal maximal torus 
\[
T=\left.\left\{t=\begin{bmatrix}t_1\\&t_2\\&&t_3\\&&&t_4\end{bmatrix}~\right|~\begin{gathered}\text{$t_1,t_2,t_3,t_4\in \bbQ_2^\times$ with}\\\text{$t_1t_4=1$ and $t_2t_3=1$}\end{gathered}\right\}
\]
The root system $R=R(G,T)$ of $G$ relative to $T$ has type $\bfC_2$
with base given by short root $a_1(t)=t_1/t_2$ and long root $a_2(t)=t_2/t_3$. For convenience, we will denote by $a_0(t)=t_4/t_1$ the lowest long root in $R$ relative to this chosen base. A base $\Delta$ of the affine root system of $G$ relative to $T$ can be given by the following three affine functionals:
\begin{align*}
\alpha_0&=a_0+1\\
\alpha_1&=a_1+0\\
\alpha_2&=a_2+0
\end{align*}
It should be noted that these simple affine roots satisfy the minimal relation
\[
\alpha_0+2\alpha_1+\alpha_2=1.
\]

By fixing a pinning of $G$ via the following root group morphisms:
\begin{align*}\small
u_{a_1}(x)&={\begin{bmatrix}1&x&&\\&1&&\\&&1&-x\\&&&1\end{bmatrix}}
&u_{a_0+a_1+a_2}(x)&={\begin{bmatrix}1&&&\\x&1&&\\&&1&\\&&-x&1\end{bmatrix}}\\
u_{a_2}(x)&={\begin{bmatrix}1&&&\\&1&x&\\&&1&\\&&&1\end{bmatrix}}
&u_{2a_1+a_0}(x)&={\begin{bmatrix}1&&&\\&1&&\\&x&1&\\&&&1\end{bmatrix}}\\
u_{a_1+a_2}(x)&={\begin{bmatrix}1&&x&\\&1&&x\\&&1&\\&&&1\end{bmatrix}}
&u_{a_0+a_1}(x)&={\begin{bmatrix}1&&&\\&1&&\\x&&1&\\&x&&1\end{bmatrix}}\\
u_{2a_2+a_1}(x)&={\begin{bmatrix}1&&&x\\&1&&\\&&1&\\&&&1\end{bmatrix}}
&u_{a_0}(x)&={\begin{bmatrix}1&&&\\&1&&\\&&1&\\x&&&1\end{bmatrix}}
\end{align*}
for $x\in k$, we are able to directly compute the structure constants in the Chevalley commutator formulas:
\begin{equation}\label{C2 commutators}
\left.
\begin{aligned}\small
{}[u_{\alpha_1}(y),u_{\alpha_2}(x)]&=u_{\alpha_1+\alpha_2}(+xy)u_{2\alpha_1+\alpha_2}(-xy^2)\\
[u_{\alpha_1}(y),u_{\alpha_0}(x)]&=u_{\alpha_0+\alpha_1}(-xy)u_{\alpha_0+2\alpha_1}(-xy^2)\\[8pt]
[u_{\alpha_1}(y),u_{\alpha_1+\alpha_2}(x)]&=u_{2\alpha_1+\alpha_2}(+2xy)\\
[u_{\alpha_1}(y),u_{\alpha_0+\alpha_1}(x)]&=u_{\alpha_0+2\alpha_1}(-2xy)\\[8pt]
[u_{\alpha_2}(y),u_{\alpha_0+\alpha_1}(x)]&=u_{\alpha_0+\alpha_1+\alpha_2}(-xy) u_{\alpha_0+1}(-x^2y)\\
[u_{\alpha_0}(y),u_{\alpha_1+\alpha_2}(x)]&=u_{\alpha_0+\alpha_1+\alpha_2}(-xy) u_{\alpha_2+1}(-x^2y)\\[8pt]
[u_{\alpha_1+\alpha_2}(y),u_{\alpha_0+2\alpha_1}(x)]&=u_{\alpha_1+1}(+xy) u_{2\alpha_1+\alpha_2+1}(+xy^2)\\
[u_{\alpha_0+\alpha_1}(y),u_{2\alpha_1+\alpha_2}(x)]&=u_{\alpha_1+1}(-xy) u_{\alpha_0+2\alpha_1+1}(+xy^2) \\[8pt]
[u_{\alpha_1+\alpha_2}(y),u_{\alpha_0+\alpha_1+\alpha_2}(x)]&=u_{\alpha_2+1}(-2xy)\\
[u_{\alpha_0+\alpha_1}(y),u_{\alpha_0+\alpha_1+\alpha_2}(x)]&=u_{\alpha_0+1}(+2xy)\\[8pt]
[u_{2\alpha_1+\alpha_2}(y),u_{\alpha_0+\alpha_1+\alpha_2}(x)]&=u_{\alpha_1+\alpha_2+1}(-xy)u_{\alpha_2+2}(+x^2y)\\
[u_{\alpha_0+2\alpha_1}(y),u_{\alpha_0+\alpha_1+\alpha_2}(x)]&=u_{\alpha_0+\alpha_1+1}(+xy)u_{\alpha_0+2}(+x^2y)
\end{aligned}\right\}
\end{equation}
for any $x,y\in\frako$.

Suppose that $\lambda$ is contained within the closure of the alcove bounded by the vanishing hyperplanes of the simple affine roots in $\Delta$. The set of positive affine roots which take value at most $1$ at $\lambda$ is therefore 
\[
\{\alpha_0,\alpha_1,\alpha_2,\alpha_0+\alpha_1,\alpha_1+\alpha_2,\alpha_0+2\alpha_1,\alpha_0+\alpha_1+\alpha_2,2\alpha_1+\alpha_2\},
\]
and those which take non-zero value at $\lambda$ form the shallow affine roots. Thus, in order to define a shallow character 
\[
\chi:P_+/P_1\to\bbC^\times,
\]
one only needs to verify that the restrictions to the shallow affine root groups satisfy the following relations coming from the commutators in \eqref{C2 commutators}:
\begin{equation}
\label{C2 character relations}
\begin{cases}
1=\chi_{\alpha_1+\alpha_2}(xy)\cdot \chi_{2\alpha_1+\alpha_2}(xy^2)&\text{if $\alpha_1,\alpha_2$ are shallow}\\
1=\chi_{\alpha_0+\alpha_1}(xy)\cdot \chi_{\alpha_0+2\alpha_1}(xy^2)&\text{if $\alpha_0,\alpha_1$ are shallow}\\
1=\chi_{\alpha_0+\alpha_1+\alpha_2}(xy)&\text{if $\alpha_2,\alpha_0+\alpha_1$ are shallow}\\
1=\chi_{\alpha_0+\alpha_1+\alpha_2}(xy)&\text{if $\alpha_0,\alpha_1+\alpha_2$ are shallow}
\end{cases}
\end{equation}
for all $x,y\in\frakf$. 

\begin{example}
Suppose that the residue field of $k$ has order $q=2$, and let $\lambda$ be the barycenter of the open alcove. Then consider the
shallow character 
\[
\chi:P_+/P_1\to\bbC^\times
\]
given by additive characters 
{\small\[
\begin{array}{|c|c|}
\hline
\alpha&\chi_\alpha(1)\\\hline\hline
\alpha_0&-1\\
\alpha_1&+1\\
\alpha_2&+1\\\hline
\alpha_0+\alpha_1&-1\\
\alpha_1+\alpha_2&-1\\\hline
\alpha_0+2\alpha_1&-1\\
\alpha_0+\alpha_1+\alpha_2&+1\\
2\alpha_1+\alpha_2&-1\\\hline
\end{array}
\]}
Note that $\chi$ has depth $3/4$, but if
\[
n_1=\begin{bmatrix}
&1\\-1\\&&&-1\\&&1
\end{bmatrix}\in N_G(T)
\]
is a lift of the simple reflection about the vanishing hyperplane of $\alpha_1$, then for any $s>3/4$
\[
\sfV_{n_1\lambda,s}\subset\frakf \sfv_{\alpha_0+\alpha_1+\alpha_2},
\]
over which $\chi$ vanishes identically; thus $\chi$ does not statisfy condition $(*)$ in Proposition~\ref{ReederYu type prop}. Despite this, we see that $\chi$ compactly induces to give a supercuspidal representation of $\bfSp_4(k)$. To see this, we first make the following observations:
\begin{itemize}
\item If $\alpha$ is a short affine root, then $n\alpha$ is also short for all $n\in N_G(T)$.
\item The only positive, short affine roots $\alpha$ for which $\chi_\alpha(1)=-1$ are $\alpha_0+\alpha_1$ and $\alpha_1+\alpha_2$.
\item For any $n\in N_G(T)$, either $n(\alpha_0+\alpha_1)$ or $n(\alpha_1+\alpha_2)$ is a positive affine root. 
\end{itemize}
Consequently, for any $n\in N_G(T)$, 
\[
{}^n\chi=\chi\text{ on }{}^nP_+\cap P
\]
only if $n$ either fixes both $\alpha_0+\alpha_1$ and $\alpha_1+\alpha_2$ or swaps them. If $n$ fixes both short affine roots, then either
\[
\left\{\begin{array}{lcl}
n(\alpha_0)&=&\alpha_0-2m\\
n(2\alpha_1+\alpha_2)&=&2\alpha_1+\alpha_2+2m
\end{array}\right\}\quad\text{or}\quad
\left\{\begin{array}{lcl}
n(\alpha_0)&=&2\alpha_0+\alpha_1-2m\\
n(2\alpha_1+\alpha_2)&=&\alpha_2+2m
\end{array}\right\}
\]
holds for some $m\in\bbZ$; if $n$ swaps the short affine roots, then either
\[
\left\{\begin{array}{lcl}
n(\alpha_0)&=&2\alpha_1+\alpha_2-2m+1\\
n(\alpha_0+2\alpha_1)&=&\alpha_2+2m+1
\end{array}\right\}\quad\text{or}\quad
\left\{\begin{array}{lcl}
n(\alpha_0)&=&\alpha_2-2m+1\\
n(\alpha_0+2\alpha_1)&=&2\alpha_1+\alpha_2+2m+1
\end{array}\right\}
\]
holds for some $m\in\bbZ$. In all cases, if $n$ does not act trivially on the affine roots, there exists some long shallow affine root $\alpha$ such that $n\alpha$ is also a positive affine root with
\[
-1=\chi_\alpha(1)\neq\chi_{n\alpha}(1)=1.
\]
Thus, given any $n\in N_G(T)$, there exists some positive affine root $\alpha$ such that $\chi_\alpha(1)\neq\chi_{n\alpha}(1)$. Finally, the affine Bruhat decomposition
\[
G=PN_G(T)P
\]
implies that $\scrI(G,P_+,\chi)=P_\chi=P_+$, where the last equality holds since $q=2$. Hence, we have constructed a supercuspidal representation $\pi(\chi)$ of $\Sp_4(k)$ coming from a shallow character of $I$ not satisfying condition $(*)$ in Proposition~\ref{ReederYu type prop}.
\end{example}

\bibliographystyle{plain}
\bibliography{bibliography}

\end{document}